\newtheorem{theorem}{Theorem}[section]
\newtheorem{corollary}[theorem]{Corollary}
\newtheorem{lemma}[theorem]{Lemma}
\newtheorem{proposition}[theorem]{Proposition}
\theoremstyle{definition}
\newtheorem{definition}[theorem]{Definition}
\theoremstyle{remark}
\newtheorem*{example}{Example}
\newtheorem*{remark}{Remark}
\title{The Varchenko Determinant for Apartments}
\author{Hery Randriamaro
\thanks{This research was funded by my mother \\
Lot II B 32 bis Faravohitra, 101 Antananarivo, Madagascar \\
e-mail: \texttt{hery.randriamaro@gmail.com}}}
\begin{document}

\maketitle

\begin{abstract}
\noindent Varchenko introduced a distance function on chambers of hyperplane arrangements that he called quantum bilinear form. That gave rise to a determinant indexed by chambers whose entry in position $(C,D)$ is the distance between $C$ and $D$: that is the Varchenko determinant. He showed that that determinant has a nice factorization. Later, Aguiar and Mahajan defined a generalization of the quantum bilinear form, and computed the Varchenko determinant given rise by that generalization for central hyperplane arrangements and their cones. This article takes inspiration from their proof strategy to compute the Varchenko determinant given rise by their distance function for apartment of hyperplane arrangements. Those latter are in fact realizable conditional oriented matroids. 

\bigskip 

\noindent \textsl{Keywords}: Hyperplane Arrangement, Distance Function, Determinant 

\smallskip

\noindent \textsl{MSC Number}: 05B20, 52C35
\end{abstract}

\section{Introduction}

\noindent Let $a_1, \dots, a_n, b \in \mathbb{R}$ such that $(a_1, \dots, a_n) \neq (0, \dots, 0)$. Recall that a hyperplane in $\mathbb{R}^n$ is a $(n-1)$-dimensional affine subspace $\big\{(x_1, \dots, x_n) \in \mathbb{R}^n \ \big|\ a_1 x_1 + \dots + a_n x_n = b\big\}$, and a hyperplane arrangement is a finite set of hyperplanes.

\noindent Denote by $\overline{S}$ the closure of a subset $S \subseteq \mathbb{R}^n$. To every hyperplane $H$ can be associated two connected open half-spaces $H^+$ and $H^-$ such that $H^+ \sqcup H^0 \sqcup H^- = \mathbb{R}^n$ and $\overline{H^+} \cap \overline{H^-} = H^0$, letting $H^0 := H$. A face of a hyperplane arrangement $\mathcal{A}$ in $\mathbb{R}^n$ is a nonempty subset $F$ in $\mathbb{R}^n$ having the form
$$F:= \bigcap_{H \in \mathcal{A}} H^{\epsilon_H(F)} \quad \text{with} \quad \epsilon_H(F) \in \{+,0,-\}.$$
Let $F_{\mathcal{A}}$ be the set formed by the faces of $\mathcal{A}$. It is a poset with partial order $\preceq$ defined by $$F \preceq G \quad \Longleftrightarrow \quad \forall H \in \mathcal{A}:\ \epsilon_H(F) \neq 0 \, \Rightarrow \, \epsilon_H(F) = \epsilon_H(G).$$

\noindent The sign sequence of a face $F \in F_{\mathcal{A}}$ is $\epsilon_{\mathcal{A}}(F) := \big(\epsilon_H(F)\big)_{H \in \mathcal{A}}$. A chamber is a face whose sign sequence contains no $0$. Denote the set formed by the chambers of $\mathcal{A}$ by $C_{\mathcal{A}}$.

\begin{definition}
	Let $\mathcal{K}$ be a subset of a hyperplane arrangement $\mathcal{A}$ in $\mathbb{R}^n$. An \textbf{apartment} of $\mathcal{A}$ is a chamber of $\mathcal{K}$. Denote the set formed by the apartments of $\mathcal{A}$ by $K_{\mathcal{A}}$.
\end{definition}

\noindent Consider an apartment $K \in K_{\mathcal{A}}$. The sets formed by the faces and the chambers in $K$ are respectively $$F_{\mathcal{A}}^K := \{F \in F_{\mathcal{A}}\ |\ F \subseteq K\} \quad \text{and} \quad C_{\mathcal{A}}^K := C_{\mathcal{A}} \cap F_{\mathcal{A}}^K.$$

\noindent Bandelt, Chepoi, and Knauer introduced in 2015 a combinatorial object called conditional oriented matroid or COM \cite{BaChKn}. COMs are common generalizations of oriented matroids and lopsided sets. The former are abstractions for directed graphs and central pseudohyperplane arrangements, while the latter are common generalizations of antimatroids and median graphs. Apartment of hyperplane arrangement is in fact the model of realizable COMs described by Bandelt et al. \cite[§~1.2]{BaChKn}. It generalizes realizability of oriented and affine oriented matroids on one hand and that of lopsided sets on the other hand. 

\smallskip

\noindent Assign a variable $h_H^{\varepsilon}$, $\varepsilon \in \{+,-\}$, to every half-space $H^{\varepsilon}$, $H \in \mathcal{A}$. We work with the polynomial ring $R_{\mathcal{A}} := \mathbb{Z}\big[h_H^{\varepsilon}\ \big|\ \varepsilon \in \{+,-\},\, H \in \mathcal{A}\big]$ in variables $h_H^{\varepsilon}$. For two chambers $C,D \in C_{\mathcal{A}}$, the set of half-spaces containing $C$ but not $D$ is $$\mathscr{H}(C,D) := \big\{H^{\epsilon_H(C)}\ \big|\ H \in \mathcal{A},\, \epsilon_H(C) = - \epsilon_H(D)\big\}.$$

\noindent The distance function $\mathrm{v}: C_{\mathcal{A}} \times C_{\mathcal{A}} \rightarrow R_{\mathcal{A}}$ of Aguiar and Mahajan is defined by  $$\mathrm{v}(C,C) = 1 \quad \text{and} \quad \mathrm{v}(C,D) = \prod_{H^{\varepsilon} \in \mathscr{H}(C,D)} h_H^{\varepsilon}\,\ \text{if}\,\ C \neq D.$$

\noindent Although $\mathrm{v}$ is not symmetric, we keep the name distance function as its authors called it so \cite[§~8.1]{AgMa}. The quantum bilinear form is $\mathrm{v}$ with the restriction $h_H^+ = h_H^-$.

\begin{definition}
The \textbf{Varchenko determinant} for an apartment $K$ of a hyperplane arrangement $\mathcal{A}$ in $\mathbb{R}^n$ is $\big|\mathrm{v}(D,C)\big|_{C,D \in C_{\mathcal{A}}^K}$.
\end{definition}

\noindent That determinant was originally defined with the quantum bilinear form and for hyperplane arrangement by Varchenko \cite[§ 1]{Va2}. But, it already appeared earlier in the implicit form of the determinant of a symmetric bilinear form on a Verma module over a $\mathbb{C}$-algebra \cite[§ 1]{ScVa}. Furthermore, it plays a key role to prove the realizability of variant models of quon algebras like a deformed quon algebra \cite[Theorem~4.2]{Ra}, and a multiparametric quon algebra \cite[Proposition~2.1]{Ra2}. The quon algebra is an approach to particle statistics in order to provide a theory in which the Pauli exclusion principle and Bose statistics are violated.

\noindent The Varchenko matrix of a hyperplane arrangement $\mathcal{A}$ is $\displaystyle V_{\mathcal{A}} := \big(\mathrm{v}(D,C)\big)_{C,D \in C_{\mathcal{A}}}$. That matrix has been investigated from several angles. For the quantum bilinear form used on hyperplane arrangements $\mathcal{S}$ in semigeneral position, Gao and Zhang computed the diagonal form of $V_{\mathcal{S}}$ \cite[Theorem~2]{GaZh}. Recently, Olzhabayev and Zhang extended that result to central pseudohyperplane arrangements in semigeneral position \cite[Theorem~1.1]{OlZh}. Denham and Hanlon studied the Smith normal of the Varchenko matrix for the restriction $h_H^+ = h_H^- = q$ \cite[Theorem~3.3]{DeHa}. And for the same restriction, Hanlon and Stanley computed the nullspace of the Varchenko matrix of braid arrangements \cite[Theorem~3.3]{HaSt}.

\bigskip

\noindent The centralization of a hyperplane arrangement $\mathcal{A}$ to a face $F \in F_{\mathcal{A}} \setminus C_{\mathcal{A}}$ is the hyperplane arrangement $\mathcal{A}_F := \{H \in \mathcal{A}\ |\ F \subseteq H\}$. The weight of $F \in F_{\mathcal{A}} \setminus C_{\mathcal{A}}$ is the monomial
$$\displaystyle \mathrm{b}_F := \prod_{H \in \mathcal{A}_F} h_H^+ \, h_H^- \in R_{\mathcal{A}}.$$
Choose a hyperplane $H \in \mathcal{A}_F$. The multiplicity of $F \in F_{\mathcal{A}} \setminus C_{\mathcal{A}}$ is the integer
$$\displaystyle \beta_F := \frac{\#\{C \in C_{\mathcal{A}}\ |\ \overline{C} \cap H = F\}}{2}.$$ 
We will see in Section~\ref{SecVar} that $\beta_F$ is independent of that chosen $H$. We can now state the main result of this article.

\begin{theorem} \label{MTh2}
Let $\mathcal{A}$ be a hyperplane arrangement in $\mathbb{R}^n$, and $K \in K_{\mathcal{A}}$. Then, $$\big|\mathrm{v}(D,C)\big|_{C,D \in C_{\mathcal{A}}^K} = \prod_{F \in F_{\mathcal{A}}^K \setminus C_{\mathcal{A}}^K} (1 - \mathrm{b}_F)^{\beta_F}.$$
\end{theorem}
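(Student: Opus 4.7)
The plan is to adapt Aguiar and Mahajan's deletion--restriction induction from \cite{AgMa} to the apartment setting. I induct on the number of hyperplanes in $\mathcal{A} \setminus \mathcal{K}$ that cross the apartment $K$, i.e., that have both open half-spaces meeting $K$. When this number is zero, $C_{\mathcal{A}}^K$ consists of the single chamber $K$, the Varchenko determinant equals $1$, and the right-hand side is the empty product, so the base case holds.

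For the inductive step, fix a crossing hyperplane $H \in \mathcal{A} \setminus \mathcal{K}$ and partition $C_{\mathcal{A}}^K = C^+ \sqcup C^-$ according to the sign with respect to $H$. In the spirit of Varchenko's and Aguiar--Mahajan's row/column reductions, for every $C \in C^+$ whose reflected neighbour $\sigma_H(C)$ (the unique chamber of $\mathcal{A}$ adjacent to $C$ across $H$) also lies in $C_{\mathcal{A}}^K$, I would subtract $h_H^-$ times the row indexed by $\sigma_H(C)$ from the row indexed by $C$. Because every entry of the $C^- \times C^+$ block is divisible by $h_H^+$, this operation both clears the $(C, \sigma_H(C))$ entry and inserts a diagonal factor $(1 - h_H^+\, h_H^-)$. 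After all paired reductions and their symmetric column counterparts, the Varchenko matrix becomes block triangular with three diagonal blocks: (i) a paired block of size $|C^+ \cap \sigma_H^{-1}(C^-)|$ equal to $(1 - h_H^+\, h_H^-)$ times the Varchenko matrix of the apartment $K \cap H$ in the restricted arrangement $\mathcal{A}^H := \{H' \cap H \mid H' \in \mathcal{A}\setminus\{H\}, H' \cap H \neq \emptyset\}$; (ii) a block corresponding to chambers of $C^+$ whose $H$-reflection lies outside $K$, which I expect to identify with the Varchenko matrix of the apartment $K \cap H^+$ in $\mathcal{A} \setminus \{H\}$; and (iii) the symmetric block on the $H^-$ side.

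Applying the induction hypothesis to each of the three smaller Varchenko determinants then yields a product of factors $(1 - \mathrm{b}_F)^{\beta_F}$, and the remaining task is to match this product with the one in the theorem. Faces $F \in F_{\mathcal{A}}^K \setminus C_{\mathcal{A}}^K$ with $H \notin \mathcal{A}_F$ appear in blocks (ii) and (iii), and their weights $\mathrm{b}_F$ and multiplicities $\beta_F$ transfer without change. Faces $F$ with $H \in \mathcal{A}_F$ appear in block (i) as faces of $\mathcal{A}^H$ inside $K \cap H$; the weight $\mathrm{b}_F$ restricted to $\mathcal{A}^H$ is missing precisely the factor $h_H^+\, h_H^-$, which is exactly what the $(1 - h_H^+\, h_H^-)$ factors from the pairings of block (i) provide. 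The main obstacle is the multiplicity bookkeeping: I must first prove the claim made before the theorem that $\beta_F$ is independent of the choice of $H \in \mathcal{A}_F$, by a direct count of chambers in $C_{\mathcal{A}}$ whose closure meets each $H' \in \mathcal{A}_F$ in $F$; and I must then verify that the number of $H$-pairings inside $K$ agrees with the sum of $\beta_F$ over the faces $F$ with $H \in \mathcal{A}_F$ lying in $F_{\mathcal{A}}^K$. This last identity is the subtlest point, because the boundary of the apartment may delete some reflection partners present in the ambient arrangement, so the count must be performed inside $K$ while the quantity $\beta_F$ is defined in terms of $C_{\mathcal{A}}$.
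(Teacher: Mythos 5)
Your proposal takes a genuinely different route from the paper, and as written it has gaps that would need substantial work to close.

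The paper does not do a deletion--restriction or row-reduction induction at all. Its strategy is: (1) extend the Tits product, the Euler-characteristic computations, and the Witt identity of Aguiar--Mahajan to affine arrangements and to the sub-face-poset $F_{\mathcal{A}}^K$; (2) use the maps $\gamma_{\mathcal{A}}$ and $\mathrm{m}$ to show that each dual basis element $D^*$ lies in the $B_{\mathcal{A}}^K$-span of the $\gamma_{\mathcal{A}}(C)$, so that $V_{\mathcal{A}}^K$ is invertible over the localization $B_{\mathcal{A}}^K$ and hence $\det V_{\mathcal{A}}^K$ is forced to have the shape $\prod_F (1-\mathrm{b}_F)^{l_F}$ (Propositions~\ref{mAD}, \ref{fdetV}, \ref{PdetK}); and (3) pin down each exponent $l_F$ by specializing $h_H^\pm \mapsto 0$ for $H \notin \mathcal{A}_E$ so as to reduce to a sub-apartment $L$ that is central at $E$, and then comparing leading monomials (Theorem~\ref{V}). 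In particular, the well-definedness of $\beta_F$ is a \emph{corollary} of this specialization argument, not an input to it.

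Your sketch has the following concrete problems. First, the block structure you describe does not conserve the size of the matrix: blocks of sizes $p$, $|C^+|-p$, $|C^-|-p$ (with $p$ the number of $H$-pairs inside $K$) add up to $|C_{\mathcal{A}}^K|-p$. What you actually need is a determinant identity for the $2\times 2$ paired structure that extracts a factor $(1-h_H^+ h_H^-)^p$ and leaves a size-$p$ restriction determinant; row and column operations by themselves do not produce such a size reduction, and the identity is not stated or proved. Second, the interaction between paired and unpaired rows/columns is not cleared: after subtracting $h_H^+\,$(row $\sigma_H(C)$) from row $C$, the entries of row $C$ in unpaired $C^+$ columns acquire the factor $(1-h_H^+h_H^-)$, while unpaired $C^+$ rows retain nonzero entries in $C^-$ columns, so the claimed triangularity needs more care and may fail at the boundary of the apartment. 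Third, and most importantly, the two counting facts you defer --- that $\beta_F$ is independent of the chosen $H \in \mathcal{A}_F$, and that the number of $H$-pairs inside $K$ equals $\sum_{F} \beta_F$ over faces $F \preceq K$ with $H \in \mathcal{A}_F$ --- are exactly what you flag as the ``subtlest point'', and your proposal offers no mechanism to establish them. In the paper these are precisely the facts that the specialization/leading-term argument of Theorem~\ref{V} is designed to deliver, and they do not obviously follow from a deletion--restriction count because (as you note) the apartment boundary removes reflection partners that exist in $\mathcal{A}$ while $\beta_F$ is defined using the full $C_{\mathcal{A}}$. Until these points are resolved, the induction does not close, so the proposal is a plausible programme but not yet a proof.
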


\noindent Recall that a cone is an apartment of a central hyperplane arrangement. Aguiar and Mahajan computed that determinant for central hyperplane arrangements \cite[Theorem~8.11]{AgMa} and their cones \cite[Theorem~8.12]{AgMa}. And Gente computed that determinant for cone of hyperplane arrangements with use of the quantum bilinear form \cite[Theorem~4.5]{Ge}.

\begin{corollary} \label{MTh1}
Let $\mathcal{A}$ be a hyperplane arrangement in $\mathbb{R}^n$. Then, $$\big|\mathrm{v}(D,C)\big|_{C,D \in C_{\mathcal{A}}} = \prod_{F \in F_{\mathcal{A}} \setminus C_{\mathcal{A}}} (1 - \mathrm{b}_F)^{\beta_F}.$$
\end{corollary}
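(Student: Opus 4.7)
My plan is to deduce Corollary~\ref{MTh1} as the specialization of Theorem~\ref{MTh2} to the trivial choice $\mathcal{K} = \emptyset$, so no new work beyond Theorem~\ref{MTh2} will be needed. The first step is to argue that $K := \mathbb{R}^n$ is itself an apartment of $\mathcal{A}$. This is a matter of unwinding the definitions: with $\mathcal{K} = \emptyset \subseteq \mathcal{A}$, the only face of $\mathcal{K}$ is the empty intersection $\bigcap_{H \in \emptyset} H^{\epsilon_H} = \mathbb{R}^n$, whose (empty) sign sequence vacuously contains no $0$. Hence $\mathbb{R}^n$ is a chamber of $\mathcal{K}$, i.e.\ an apartment of $\mathcal{A}$ in the sense of the definition.

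The second step is to identify the combinatorial data of this apartment. Since every face $F$ of $\mathcal{A}$ satisfies $F \subseteq \mathbb{R}^n = K$, we immediately obtain $F_{\mathcal{A}}^K = F_{\mathcal{A}}$, and therefore $C_{\mathcal{A}}^K = C_{\mathcal{A}} \cap F_{\mathcal{A}}^K = C_{\mathcal{A}}$. Substituting $K = \mathbb{R}^n$ into Theorem~\ref{MTh2}, the right-hand side becomes $\prod_{F \in F_{\mathcal{A}} \setminus C_{\mathcal{A}}} (1 - \mathrm{b}_F)^{\beta_F}$, and the left-hand side becomes the full Varchenko determinant $\big|\mathrm{v}(D,C)\big|_{C,D \in C_{\mathcal{A}}}$, which is exactly the statement of the corollary. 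I do not anticipate any obstacle here: the corollary is literally the $\mathcal{K} = \emptyset$ case of the theorem, and its entire content is already encoded in Theorem~\ref{MTh2}.
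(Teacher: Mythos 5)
Your proposal is correct and coincides with the paper's own proof, which simply observes that $\mathbb{R}^n$ is the unique chamber of the empty subset of $\mathcal{A}$ and applies Theorem~\ref{MTh2}. You spell out the details (that $F_{\mathcal{A}}^K = F_{\mathcal{A}}$ and $C_{\mathcal{A}}^K = C_{\mathcal{A}}$ when $K = \mathbb{R}^n$), but the approach is identical.
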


\begin{proof}
Consider the unique chamber $\mathbb{R}^n$ of the subset $\{\emptyset\}$ of $\mathcal{A}$.
\end{proof}

\noindent The original result of Varchenko \cite[Theorem~1.1]{Va2} is Corollary~\ref{MTh1} restricted to the quantum bilinear form. Pfeiffer and Randriamaro computed that determinant for Coxeter arrangements \cite[Theorem~1.1]{PfRa}. Furthermore, Corollary~\ref{MTh1} plays a key role in the computing of the Varchenko determinant of collages \cite[Theorem~1.6]{Ra3}.

\begin{example}
Consider the hyperplane arrangement $\mathcal{E} = \{H_1, H_2, H_3, H_4\}$ in Figure~\ref{ex}. Let for instance $F$ be the face such that $\epsilon_{\mathcal{E}}(F) = (-,-,0,0)$. Then, the centralization to $F$ is $\{H_3, H_4\}$, the weight of $F$ is $h_3^+ h_3^- h_4^+ h_4^-$, and the multiplicity of $F$ is $0$. Moreover, the Varchenko determinant for the apartment $H_1^-$ is
$$\begin{vmatrix}
1 & h_2^- & h_2^- h_3^- & h_4^- & h_2^- h_4^- & h_2^- h_3^- h_4^- \\
h_2^+ & 1 & h_3^- & h_2^+ h_4^- & h_4^- & h_3^- h_4^- \\
h_2^+ h_3^+ & h_3^+ & 1 & h_2^+ h_3^+ h_4^- & h_3^+ h_4^- & h_4^- \\
h_4^+ & h_2^- h_4^+ & h_2^- h_3^- h_4^+ & 1 & h_2^- & h_2^- h_3^- \\
h_2^+ h_4^+ & h_4^+ & h_3^- h_4^+ & h_2^+ & 1 & h_3^- \\
h_2^+ h_3^+ h_4^+ & h_3^+ h_4^+ & h_4^+ & h_2^+ h_3^+ & h_3^+ & 1
\end{vmatrix} = (1 - h_2^+ h_2^-)^2 \, (1 - h_3^+ h_3^-)^2 \, (1 - h_4^+ h_4^-)^3.$$

\begin{figure}[h]
	\centering
	\includegraphics[scale=0.8]{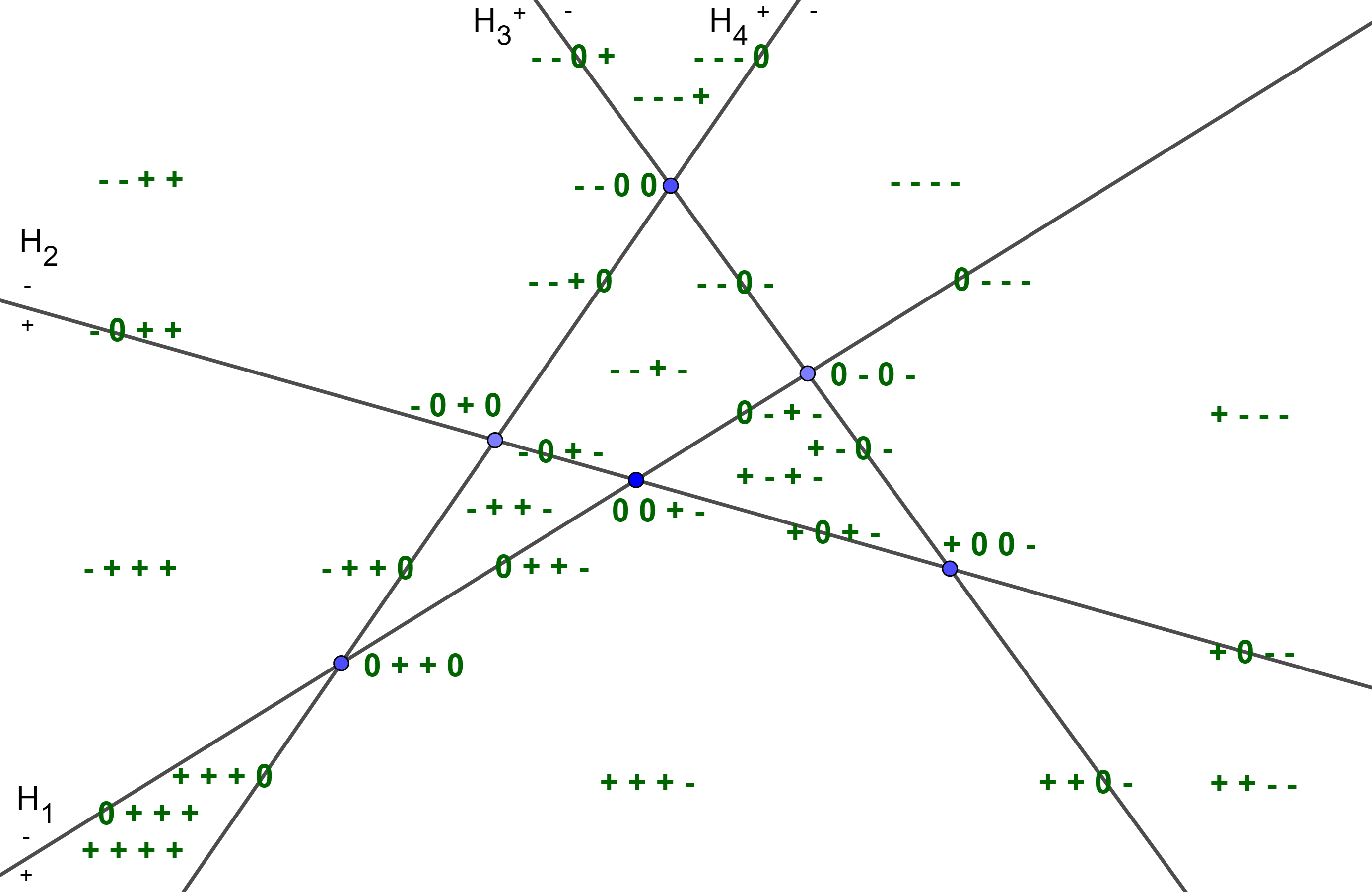}
	\caption{The Faces of $\mathcal{E} = \{H_1, H_2, H_3, H_4\}$}
	\label{ex}
\end{figure}
\end{example}

\noindent This article is structured as follows: We prove in Section~\ref{SecTi} that the set formed by the faces of hyperplane arrangements is a semigroup. In Section~\ref{SecEul}, we compute the Euler characteristics of CW complexes associated to hyperplane arrangements. From those results, we establish two generalizations of a Witt identity in Section~\ref{SecWit}. Finally, we use both generalizations to prove Theorem~\ref{MTh2} in Section~\ref{SecVar}.

\section{A Subsemigroup of Tits Monoids} \label{SecTi}

\noindent It is known that, for a central hyperplane arrangement $\mathcal{A}$, the set $F_{\mathcal{A}}$ is a monoid \cite[§ 1.4.2]{AgMa}, called Tits monoid, with the Tits product defined by: If $F,G \in F_{\mathcal{A}}$, then $FG$ is the face in $F_{\mathcal{A}}$ such that, for every $H \in \mathcal{A}$, $\displaystyle \epsilon_H(FG) := \begin{cases}
\epsilon_H(F) & \text{if}\ \epsilon_H(F) \neq 0, \\ \epsilon_H(G) & \text{otherwise}. \end{cases}$

\noindent We prove that $F_{\mathcal{A}}$ is a semigroup for the Tits product of a hyperplane arrangement $\mathcal{A}$.

\begin{proposition} \label{Tits}
Let $\mathcal{A}$ be a hyperplane arrangement in $\mathbb{R}^n$, and two faces $F,G \in F_{\mathcal{A}}$. Then, there exists a unique face $K \in F_{\mathcal{A}}$ such that, for every $H \in \mathcal{A}$,
$$\epsilon_H(K) = \begin{cases}
\epsilon_H(F) & \text{if}\ \epsilon_H(F) \neq 0, \\ \epsilon_H(G) & \text{otherwise}. \end{cases}$$
\end{proposition}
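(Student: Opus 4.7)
The plan splits into a trivial uniqueness statement and a short geometric existence argument. Since every face $K \in F_{\mathcal{A}}$ is reconstructed from its sign sequence via $K = \bigcap_{H \in \mathcal{A}} H^{\epsilon_H(K)}$, two faces sharing a sign sequence must coincide, so the only real content is producing some face with the prescribed sign sequence.

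To construct such a face, I would pick points $x \in F$ and $y \in G$ and examine the segment $z(t) := (1-t)\,x + t\,y$ for small $t > 0$. For each hyperplane $H \in \mathcal{A}$, fix an affine form $\ell_H : \mathbb{R}^n \to \mathbb{R}$ with $H = \ell_H^{-1}(0)$, $H^+ = \ell_H^{-1}\bigl((0,\infty)\bigr)$ and $H^- = \ell_H^{-1}\bigl((-\infty,0)\bigr)$. Then $\ell_H\bigl(z(t)\bigr) = (1-t)\,\ell_H(x) + t\,\ell_H(y)$ is an affine function of $t$, whose sign I would analyze according to $\epsilon_H(F)$. If $\epsilon_H(F) \neq 0$, then $\ell_H(x) \neq 0$ and by continuity $\ell_H\bigl(z(t)\bigr)$ has the same nonzero sign as $\ell_H(x)$ for all sufficiently small $t$, yielding the contribution $\epsilon_H(F)$ to the sign sequence. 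If $\epsilon_H(F) = 0$, then $\ell_H(x) = 0$ and $\ell_H\bigl(z(t)\bigr) = t\,\ell_H(y)$, whose sign for any $t > 0$ matches that of $\ell_H(y)$, hence equals $\epsilon_H(G)$; this covers also the case $\ell_H(y) = 0$, in which the whole segment lies in $H$.

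Because $\mathcal{A}$ is finite, a single $t_0 > 0$ works simultaneously for every $H \in \mathcal{A}$, and the unique face of $\mathcal{A}$ containing $z(t_0)$ is then the required $K$. No centrality hypothesis enters anywhere; the argument treats parallel, non-parallel, bounded and unbounded configurations uniformly. The only step deserving care is the case $\epsilon_H(F) = 0$, where one must note that an infinitesimal shift of $x$ toward $y$ forces the sign of $\ell_H$ to become precisely $\epsilon_H(G)$, which is where the \emph{non-symmetry} of the Tits product (its preference for $F$ over $G$) surfaces. I expect this to be the main bookkeeping point rather than a genuine obstacle.
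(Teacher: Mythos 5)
Your proposal is correct and follows essentially the same geometric idea as the paper: both proofs pick $x \in F$, $y \in G$ and look at the segment $z(t) = (1-t)x + ty$ for small $t>0$, identifying $K$ as the face containing $z(t_0)$. Where the paper argues via a decomposition of $[x,y]$ into the successive faces it meets and a dichotomy on whether the first face after $F$ is $\prec F$ or $\succ F$, you compute the sign of each affine form $\ell_H$ along the segment directly; this is a tidier and more self-contained rendering of the same argument, though the paper's face-decomposition bookkeeping is not wasted since it is reused later in the proof of Lemma~\ref{vCFD}.
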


\begin{proof}
Define a path $\mathrm{p}:[0,1] \rightarrow \mathbb{R}^n$ from $x \in F$ to $y \in G$ by $\mathrm{p}(t) := (1-t)x + ty$. It is a homeomorphism between $[0,1]$ and $[x,y] := \mathrm{p}\big([0,1]\big)$. There exist $F_1, \dots, F_k \in F_{\mathcal{A}} \setminus \{F\}$, with $F_k = G$, such that
\begin{itemize}
\item $\forall i \in [k]:\ F_i \cap [x,y] \neq \emptyset$,
\item $\displaystyle [x,y] = (F \cap [x,y]) \sqcup \bigsqcup_{i \in [k]} (F_i \cap [x,y])$,
\item and $\forall i,j \in [k]:\ i<j \, \Rightarrow \, \sup \mathrm{p}^{-1}\big(F_i \cap [x,y]\big) < \inf \mathrm{p}^{-1}\big(F_j \cap [x,y]\big)$.
\end{itemize}

\noindent If $F_1 \prec F$, then $$\forall H \in \mathcal{A}:\ \epsilon_H(F) = 0 \ \Rightarrow \ \epsilon_H(G) = 0.$$ We deduce that $F$ is the face $K \in F_{\mathcal{A}}$ such that $\displaystyle \epsilon_H(K) = \begin{cases}
\epsilon_H(F) & \text{if}\ \epsilon_H(F) \neq 0, \\ \epsilon_H(G) & \text{otherwise} \end{cases}$.

\noindent Otherwise, $F_1 \succ F$. So, $F \subseteq \overline{F_1}$, which means $\forall H \in \mathcal{A}:\ \epsilon_H(F) \neq 0 \, \Rightarrow \, \epsilon_H(F_1) = \epsilon_H(F)$. Hence,
$$\forall H \in \mathcal{A},\, \epsilon_H(F) = 0:\ \epsilon_H(G) = \epsilon_H(F_1).$$ We deduce that $F_1$ is the face $K \in F_{\mathcal{A}}$ such that $\displaystyle \epsilon_H(K) = \begin{cases}
\epsilon_H(F) & \text{if}\ \epsilon_H(F) \neq 0, \\ \epsilon_H(G) & \text{otherwise} \end{cases}$.
\end{proof}

\begin{corollary}
Given a hyperplane arrangement $\mathcal{A}$ in $\mathbb{R}^n$, the set $F_{\mathcal{A}}$ together with the binary operation defined in Proposition~\ref{Tits} forms a semigroup.
\end{corollary}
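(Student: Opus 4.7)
The plan is to leverage Proposition~\ref{Tits} for well-definedness and then verify associativity directly from the sign-sequence characterization. Proposition~\ref{Tits} already gives that, for any two faces $F, G \in F_{\mathcal{A}}$, the composite $FG$ is a well-defined element of $F_{\mathcal{A}}$; the binary operation is thus closed on $F_{\mathcal{A}}$, and it only remains to establish associativity $(FG)H = F(GH)$ for all $F,G,H \in F_{\mathcal{A}}$.

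My approach is to check the equality pointwise on sign sequences: two faces coincide iff they assign the same sign to every hyperplane, so I will fix an arbitrary $L \in \mathcal{A}$ and compute $\epsilon_L((FG)H)$ and $\epsilon_L(F(GH))$. Unfolding the definition, $\epsilon_L((FG)H)$ equals $\epsilon_L(F)$ when $\epsilon_L(F) \neq 0$; equals $\epsilon_L(G)$ when $\epsilon_L(F) = 0$ and $\epsilon_L(G) \neq 0$; and equals $\epsilon_L(H)$ when $\epsilon_L(F) = \epsilon_L(G) = 0$. The same three cases give identical outcomes for $\epsilon_L(F(GH))$, since $\epsilon_L(GH)$ is $\epsilon_L(G)$ when nonzero and $\epsilon_L(H)$ otherwise. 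In every case the two sign values match, so $(FG)H$ and $F(GH)$ have the same sign sequence and hence are the same face.

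There is no serious obstacle here: the associativity of the operation "keep $F$'s sign unless it is zero, in which case defer to $G$" is formal, and closure has been handled by the preceding proposition. The only mild subtlety is remembering that the signs of $FG$ and $GH$ themselves need not always be nonzero, which is why the case analysis must be done on $\epsilon_L(F)$ and $\epsilon_L(G)$ rather than on whether $FG$ or $GH$ already determines a sign at $L$. With that observation the three-case verification completes the proof that $(F_{\mathcal{A}}, \cdot)$ is a semigroup.
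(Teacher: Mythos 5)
Your proof is correct and follows essentially the same route as the paper: closure is delegated to Proposition~\ref{Tits}, and associativity is checked hyperplane-by-hyperplane via the identical three-case breakdown on $\epsilon_L(F)$ and $\epsilon_L(G)$.
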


\begin{proof}
It remains to prove the associativity of the binary operation. Let $E,F,G \in F_{\mathcal{A}}$, and $H \in \mathcal{A}$. Then,
$$\epsilon_H\big((EF)G\big) =\ \begin{cases}
\epsilon_H(E) & \text{if}\ \epsilon_H(E) \neq 0, \\
\epsilon_H(F) & \text{if}\ \epsilon_H(E)=0\ \text{and}\ \epsilon_H(F) \neq 0, \\
\epsilon_H(G) & \text{otherwise} \end{cases}\ = \epsilon_H\big(E(FG)\big).$$

\end{proof}

\noindent We could also deduce that $F_{\mathcal{A}}$ is a semigroup from the facts that every COM is a semigroup \cite[Proposition~2.12]{MaSaSt} and $F_{\mathcal{A}}$ is a COM \cite[Proposition~2.13]{MaSaSt}. However, we prefer to provide a proof in a hyperplane arrangement context especially because we will use the path $\mathrm{p}$ in the proof of Proposition~\ref{Tits} to prove Lemma~\ref{vCFD} in Section~\ref{SecVar}.

\section{The Euler Characteristic of Chambers}  \label{SecEul}

\noindent We compute the Euler characteristics of special subsets relative to hyperplane arrangements. Although the calculations are relatively simple, we mention the results as they are used later. 

\smallskip

\noindent The $m$-ball $B^m$ consists of a point if $m=0$, otherwise $B^m := \{x \in \mathbb{R}^m\ |\ \Vert x \Vert < 1\}$. An $m$-cell is a topological space homeomorphic to $B^m$. The dimension of an $m$-cell is defined to be $m$.

\noindent Let $X$ be a Hausdorff topological space, and assume that it is represented as a disjoint union of cells $e_{\alpha}$, $\alpha \in A$. Recall that the pair $\big(X, \{e_{\alpha}\}_{\alpha \in A}\big)$ is a CW complex if the following two conditions are satisfied:
\begin{enumerate}
\item If $\dim e_{\alpha} = m$, there exists a continuous map $f_{\alpha}: \overline{B^m} \rightarrow X$ such that
\begin{itemize}
\item the restriction of $f_{\alpha}$ to $B^m$ is a homeomorphism onto $e_{\alpha}$,
\item $f_{\alpha}(\partial B^m)$ is a union of finitely many cells of dimension less than $m$.
\end{itemize}
\item A subset $Y \subseteq X$ is closed in $X$ if and only if $Y \cap \overline{e_{\alpha}}$ is closed for any $\alpha$. 
\end{enumerate}

\noindent Recall that the Euler characteristic of a CW complex $\big(X, \{e_{\alpha}\}_{\alpha \in A}\big)$ is $$\chi(X) := \sum_{\alpha \in A} (-1)^{\dim e_{\alpha}}.$$

\noindent A polyhedron of a hyperplane arrangement $\mathcal{A}$ is a union of faces in $F_{\mathcal{A}}$ which is connected. We precisely investigate CW complexes $\big(P, \{F \in F_{\mathcal{A}}\ |\ F \subseteq P\}\big)$, where $P$ is a polyhedron $\mathcal{A}$.

\smallskip

\noindent \emph{Let $\breve{C}_{\mathcal{A}}$ be the set formed by the bounded chambers of a hyperplane arrangement $\mathcal{A}$. We distinguish three types of unbounded chambers $C \in C_{\mathcal{A}} \setminus \breve{C}_{\mathcal{A}}$ according to their frontiers:
\begin{enumerate}
\item if $\partial C$ is homeomorphic to $B^{n-1}$, we say that $C$ is of type $1$, and write $C \in C_{\mathcal{A}}^{(1)}$,
\item if $\partial C$ is homeomorphic to $\partial B^{n-1} \times \mathbb{R}$, we say that $C$ is of type $2$, and write $C \in C_{\mathcal{A}}^{(2)}$,
\item if $\partial C$ is homeomorphic to $\partial B^1 \times \mathbb{R}^{n-1}$, we say that $C$ is of type $3$, and write $C \in C_{\mathcal{A}}^{(3)}$.
\end{enumerate}}

\begin{example}
In Figure~\ref{ex2}, $C_1, C_2, C_3$ are chambers of type $2$, and the remaining chambers are of type $1$. Chambers of type $3$ are those whose frontiers are two parallel hyperplanes.

\begin{figure}[h]
	\centering
	\includegraphics[scale=0.3]{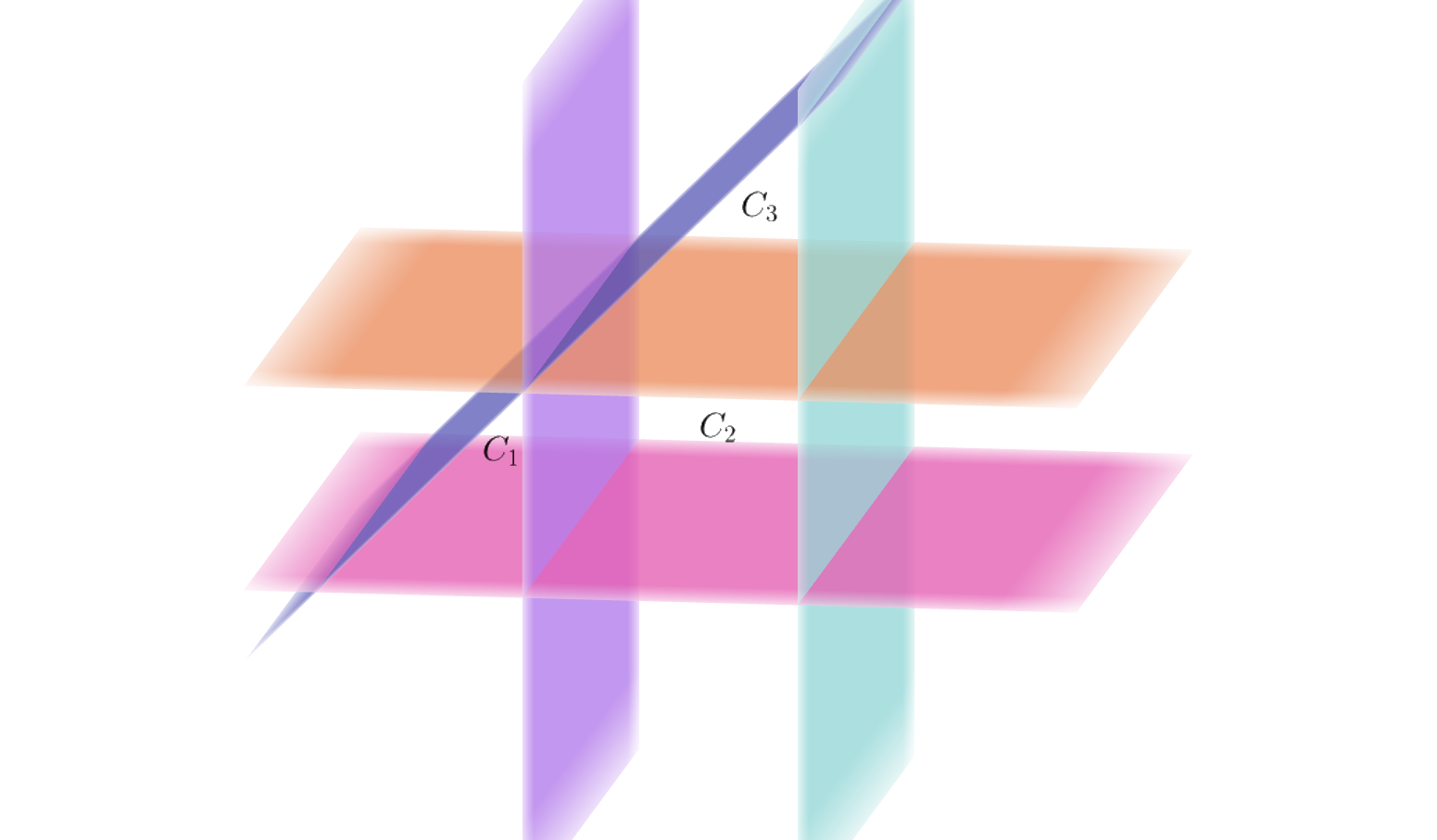}
	\caption{Chambers of Types $1$ and $2$}
	\label{ex2}
\end{figure}
\end{example}

\begin{lemma} \label{Ch}
Let $\mathcal{A}$ be a hyperplane arrangement in $\mathbb{R}^n$, and $C \in C_{\mathcal{A}}$. Then,
$$\chi(\overline{C}) = \begin{cases} 1 & \text{if} \quad C \in \breve{C}_{\mathcal{A}}, \\
0 & \text{if}\quad C \in C_{\mathcal{A}}^{(1)}, \\
-1 & \text{if} \quad C \in C_{\mathcal{A}}^{(2)}, \\
(-1)^{n-1} & \text{if} \quad C \in C_{\mathcal{A}}^{(3)}. \end{cases}$$
\end{lemma}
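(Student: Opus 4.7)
The plan is to invoke additivity of the combinatorial Euler characteristic. Since $C$ is the unique $n$-dimensional face of the CW complex $\overline{C}$ and the remaining faces form the subcomplex $\partial C$, one has
$$\chi(\overline{C}) = (-1)^n + \chi(\partial C),$$
so the whole lemma reduces to four computations of $\chi(\partial C)$.

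To carry those out, I would first observe that the cellular sum $\sum_\alpha (-1)^{\dim e_\alpha}$ defined in the text coincides with the compactly supported Euler characteristic $\chi_c$ of the underlying space: each open $m$-cell satisfies $\chi_c = (-1)^m$, and $\chi_c$ is additive on disjoint locally closed decompositions. Consequently $\chi(\partial C)$ depends only on the homeomorphism type of $\partial C$, which is exactly what the type classification prescribes. The case work then rests on the standard facts $\chi_c(\mathbb{R}) = -1$, $\chi_c(\mathbb{R}^{m}) = (-1)^{m}$, $\chi_c(S^{m}) = 1 + (-1)^m$, and the product identity $\chi_c(X \times Y) = \chi_c(X)\,\chi_c(Y)$.

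The four cases are then mechanical. For bounded $C$, $\partial C \cong S^{n-1}$ gives $\chi(\partial C) = 1 + (-1)^{n-1}$ and $\chi(\overline{C}) = 1$. For type $1$, $\partial C \cong B^{n-1} \cong \mathbb{R}^{n-1}$ gives $\chi(\partial C) = (-1)^{n-1}$, and thus $\chi(\overline{C}) = 0$. For type $2$, $\partial C \cong S^{n-2} \times \mathbb{R}$ gives $\chi(\partial C) = -(1 + (-1)^n)$, and adding $(-1)^n$ collapses to $-1$ for both parities of $n$. For type $3$, $\partial C$ is a disjoint union of two copies of $\mathbb{R}^{n-1}$, so $\chi(\partial C) = 2(-1)^{n-1}$ and $\chi(\overline{C}) = (-1)^n + 2(-1)^{n-1} = (-1)^{n-1}$.

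The main subtlety is the identification of the paper's cellular definition of $\chi$ with the topologically invariant $\chi_c$ of the underlying space; this is needed to transfer the calculation from the cell decomposition inherited from $\mathcal{A}$ to the prescribed homeomorphism model for $\partial C$. Once this point is settled --- which is standard but worth stating explicitly --- the lemma amounts to the four case-by-case $\chi_c$ computations above.
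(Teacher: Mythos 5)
Your proof is correct and follows essentially the same route as the paper: decompose $\overline{C}=C\sqcup\partial C$, note that the cellular sum $\sum_\alpha(-1)^{\dim e_\alpha}$ is additive and invariant under homeomorphism, and compute case by case. Two small differences: you make explicit the identification of the paper's cellular $\chi$ with the compactly supported Euler characteristic $\chi_c$ (the paper uses this additivity tacitly, which is worth flagging since for unbounded $C$ the face decomposition of $\overline{C}$ is not a CW complex in the narrow sense the paper recalls), and in the type~$2$ case you apply the product formula $\chi_c(S^{n-2}\times\mathbb{R})=\chi_c(S^{n-2})\chi_c(\mathbb{R})$ directly, whereas the paper instead cuts $C$ into two type~$1$ pieces along a bounded $(n-1)$-cell and uses inclusion--exclusion; both give $-1$.
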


\begin{proof}
If $C \in \breve{C}_{\mathcal{A}}$, then $\overline{C}$ is homotopy equivalent to a point, and $\chi(\overline{C}) = 1$.

\noindent (1) If $C \in C_{\mathcal{A}}^{(1)}$, $\chi(\overline{C}) = \chi(C) + \chi(\partial C) = \chi(B^n) + \chi(B^{n-1}) = (-1)^n + (-1)^{n-1} = 0$. 

\noindent (2) If $C \in C_{\mathcal{A}}^{(2)}$, divide $C$ in two chambers $C_1$ and $C_2$ of type $1$ by adding a bounded $(n-1)$-dimensional cell $P$ in $C$. Then, $$\chi(\overline{C}) = \chi(\overline{C_1}) + \chi(\overline{C_2}) - \chi(\overline{C_1} \cap \overline{C_2}) = - \chi(\overline{C_1} \cap \overline{C_2}) = - \chi(\overline{P}) = -1.$$

\noindent (3) If $C \in C_{\mathcal{A}}^{(3)}$, $\chi(\overline{C}) = \chi(C) + \chi(\partial C) = \chi(B^n) + 2\chi(B^{n-1}) = (-1)^{n-1}$.
\end{proof}

\noindent A panel of a chamber $C \in C_{\mathcal{A}}$ is a face $F \in F_{\mathcal{A}} \setminus C_{\mathcal{A}}$ such that $F \subseteq \overline{C}$ and $\dim F = n-1$. Denote the set formed by the panels of $C$ by $P_C$.

\begin{lemma} \label{ChM}
Let $\mathcal{A}$ be a hyperplane arrangement in $\mathbb{R}^n$, $C \in C_{\mathcal{A}}$, and $\{F_j\}_{j \in J}$ a nonempty set formed by panels of $C$ such that $\{F_j\}_{j \in J} \neq P_C$, and, if $\#J > 1$,
$$\forall j \in J,\, \exists k \in J \setminus \{j\}:\ \dim \overline{F_j} \cap \overline{F_k} = n-2.$$ 
Then, $$\chi\Big(\overline{C} \setminus \bigcup_{j \in J} \overline{F_j} \Big) = \begin{cases}
-1 & \text{if} \ C \in C_{\mathcal{A}}^{(1)}\ \text{and $\displaystyle \bigcup_{j \in J} \overline{F_j}$ is bounded}, \\ 0 & \text{otherwise}.\end{cases}$$
\end{lemma}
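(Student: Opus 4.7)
The plan is to exploit the additivity of the combinatorial Euler characteristic on the CW complex $\overline{C}$ equipped with its natural face decomposition $\{F \in F_{\mathcal{A}} : F \subseteq \overline{C}\}$. Writing $A := \bigcup_{j \in J}\overline{F_j}$, this is a subcomplex of $\overline{C}$, so every cell of $\overline{C}$ lies either entirely in $A$ or entirely in $\overline{C}\setminus A$. Consequently
$$\chi(\overline{C}) = \chi(A) + \chi(\overline{C}\setminus A),$$
and Lemma~\ref{Ch} supplies $\chi(\overline{C})$ in each of the four cases for $C$. The task therefore reduces to determining $\chi(A)$ under the hypotheses placed on $\{F_j\}_{j \in J}$.

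To compute $\chi(A)$ I would proceed by induction on $|J|$. For $|J|=1$ the set $A=\overline{F_j}$ is the closure of a single panel, which is itself the closure of a chamber in the $(n-1)$-dimensional arrangement induced on the hyperplane $H\supseteq F_j$; thus Lemma~\ref{Ch} applied one dimension lower yields $\chi(A)$ directly. For the induction step I would pick an index $j_0 \in J$ whose removal preserves the codim-$2$ adjacency condition on the remaining panels (possible since the condition only requires each $F_j$ to have at least one neighbour), and apply the Mayer--Vietoris-type identity
$$\chi(A) = \chi\Big(\bigcup_{j\in J\setminus\{j_0\}}\overline{F_j}\Big) + \chi(\overline{F_{j_0}}) - \chi\Big(\overline{F_{j_0}}\cap \bigcup_{j\in J\setminus\{j_0\}}\overline{F_j}\Big).$$
The last term is the Euler characteristic of a union of codim-$2$ faces of $\overline{C}$, which is controlled by the same recursive scheme one dimension further down.

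The conclusion then follows by a case analysis on the type of $C$ and on whether $A$ is bounded. The hypothesis $\{F_j\}_{j \in J}\neq P_C$ is used to guarantee that some panel of $C$ is absent from $A$, which in turn forces $A$ to be disk-like rather than sphere-like; combined with the codim-$2$ adjacency this gives $\chi(A) = \chi(\overline{C})$ in the ``otherwise'' branches and $\chi(A) = \chi(\overline{C}) + 1$ in the branch where $C \in C_{\mathcal{A}}^{(1)}$ with $A$ bounded, and subtracting yields the claimed values $0$ and $-1$.

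The main obstacle I expect is the bookkeeping in this case analysis, specifically matching $\chi(A)$ against $\chi(\overline{C})$ when $C$ is of type $(2)$ or $(3)$, where the boundary $\partial C$ has nontrivial topology (a cylinder, respectively a pair of parallel hyperplanes). For these types one must use both the condition $A\neq \partial C$ and the recursive Lemma~\ref{Ch} values in the induced arrangements on each panel and each codim-$2$ intersection to verify that the cancellation in the additivity identity goes through cleanly. Once this bookkeeping is in place, the statement drops out.
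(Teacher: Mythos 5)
Your overall strategy is close to the paper's for the bounded and type-$1$ cases: the paper also passes to $\chi\big(\bigcup_j\overline{F_j}\big)$ and subtracts it from $\chi(\overline{C})$ (Lemma~\ref{Ch}), asserting that $\bigcup_j\overline{F_j}$ is contractible when bounded and decomposing it into $\mathrm{int}\cong B^{n-1}$, $\partial\cong B^{n-2}$ when unbounded. For types $2$ and $3$, however, the paper does \emph{not} compute $\chi\big(\bigcup_j\overline{F_j}\big)$; it directly identifies the homeomorphism type of $\overline{C}\setminus\bigcup_j\overline{F_j}$ (the closure of a type-$1$ chamber, respectively $C\sqcup H$ for a hyperplane $H$) and reads off $\chi=0$. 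So you are inverting the logic in those branches, which is fine in principle, and your target values $\chi(A)=\chi(\overline{C})$ (otherwise) and $\chi(A)=\chi(\overline{C})+1$ (type $1$, $A$ bounded) are indeed the correct ones.

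Where your outline leaves a genuine gap is the Mayer--Vietoris induction on $|J|$. You need the recursion to yield $\chi(A)=\chi(A')$ where $A'=\bigcup_{j\neq j_0}\overline{F_j}$, i.e.\ $\chi(\overline{F_{j_0}})=\chi\big(\overline{F_{j_0}}\cap A'\big)$, but this is not invariantly true. For instance, take the three lines $x=0$, $y=0$, $x+y=1$ in $\mathbb{R}^2$, $C=\{x>0,\,y>0,\,x+y>1\}$ (type $1$), $F_{j_0}$ the ray $\{x=0,\,y>1\}$, and $A'=\overline{F_3}$ the closed segment on $x+y=1$. Then $\chi(A')=1$ while $\chi(A)=0$; the identity balances only because $\chi(\overline{F_{j_0}})-\chi(\overline{F_{j_0}}\cap A')=0-1=-1$, i.e.\ removing $F_{j_0}$ changes $A$ from unbounded to bounded and shifts you across the case split in the statement. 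The correction term is therefore not generically zero, and tracking it requires knowing the type (in the induced $(n-1)$-arrangement) of $\overline{F_{j_0}}$ \emph{and} the topology of the codimension-$2$ intersection $\overline{F_{j_0}}\cap A'$ at every step --- precisely the bookkeeping you flag as ``the main obstacle.'' Without carrying that out, the induction is not a proof; it is a proof scheme whose convergence to the claimed values is exactly what needs to be shown. The paper sidesteps this by not inducting at all: it asserts the homotopy/homeomorphism type of $A$ or $\overline{C}\setminus A$ per case directly from the adjacency hypothesis and $\{F_j\}_{j\in J}\neq P_C$, which is a shorter route (though also stated rather than fully justified).
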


\begin{proof}
If $C \in \breve{C}_{\mathcal{A}}$, then $\displaystyle \bigcup_{j \in J} \overline{F_j}$ is homotopy equivalent to a point.

\noindent (1) Suppose that $C \in C_{\mathcal{A}}^{(1)}$.
\begin{itemize}
\item If $\displaystyle \bigcup_{j \in J} \overline{F_j}$ is bounded, then it is homotopy equivalent to a point.
\item Otherwise, $\displaystyle \mathrm{int}\Big(\bigcup_{j \in J} \overline{F_j}\Big)$ is homeomorphic to $B^{n-1}$ while $\displaystyle \partial \bigcup_{j \in J} \overline{F_j}$ to $B^{n-2}$.
\end{itemize}

\noindent (2) If $C \in C_{\mathcal{A}}^{(2)}$, then $\displaystyle \overline{C} \setminus \bigcup_{j \in J} \overline{F_j}$ is homeomorphic to the closure of a chamber of type $1$.

\noindent (3) If $C \in C_{\mathcal{A}}^{(3)}$, there is a hyperplane $H$ such that $\displaystyle \overline{C} \setminus \bigcup_{j \in J} \overline{F_j} = C \sqcup H$. 
\end{proof}

\section{Two Generalizations of a Witt Identity}  \label{SecWit}

\noindent We extend a Witt identity \cite[Proposition~7.30]{AgMa} to affine hyperplane arrangements.

\smallskip

\noindent A nested face of a hyperplane arrangement $\mathcal{A}$ is a pair $(F,G)$ of faces in $F_{\mathcal{A}}$ such that $F \prec G$. For a nested face $(F,G)$, define the set of faces $F_{\mathcal{A}}^{(F,G)} := \{K \in F_{\mathcal{A}}\ |\ F \preceq K \preceq G\}$.

\begin{lemma}
Let $\mathcal{A}$ be a hyperplane arrangement in $\mathbb{R}^n$, $D \in C_{\mathcal{A}}$, and $(A,D)$ a nested face. Then, $C_{\mathcal{A}}$ has a chamber $\tilde{D}_A$ whose sign sequence is defined, for every $H \in \mathcal{A}$, by
$$\epsilon_H(\tilde{D}_A) = \begin{cases} -\epsilon_H(D) & \text{if}\ \epsilon_H(A) = 0, \\
\epsilon_H(A) & \text{otherwise}. \end{cases}$$
\end{lemma}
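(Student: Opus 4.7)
The plan is to construct $\tilde{D}_A$ geometrically by reflecting a point of $D$ through a point of $A$. Since $A \prec D$ implies $A \subseteq \overline{D}$, any point $x \in A$ is a limit of points in $D$, so I can choose $y \in D$ arbitrarily close to $x$. I then set $z := 2x - y$ (the point symmetric to $y$ across $x$) and claim that the chamber $\tilde{D}_A$ containing $z$ has the prescribed sign sequence.

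To verify this, I would split the hyperplanes of $\mathcal{A}$ into two groups. First, if $\epsilon_H(A) = 0$, then $x \in H$, so by the affine reflection formula $z$ lies on the opposite side of $H$ from $y$; since $\epsilon_H(y) = \epsilon_H(D)$, we obtain $\epsilon_H(z) = -\epsilon_H(D)$, as required. Second, if $\epsilon_H(A) \neq 0$, then $A$ (and hence $x$) lies strictly on the side $\epsilon_H(A)$ of $H$; because this side is an open half-space containing $x$, choosing $y$ close enough to $x$ forces $y$, and therefore also $z = 2x - y$, to lie in that same open half-space, giving $\epsilon_H(z) = \epsilon_H(A)$. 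In particular $z$ avoids every hyperplane, so the face containing $z$ is a chamber.

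The only subtle point is that the choice of $y$ must be uniform over all $H \in \mathcal{A}$. This is routine: for each hyperplane $H$ with $\epsilon_H(A) \neq 0$, the open half-space $H^{\epsilon_H(A)}$ contains $x$, and since $\mathcal{A}$ is finite, the intersection of these finitely many open neighborhoods of $x$ is still an open neighborhood of $x$. Taking $y \in D$ inside a small enough ball around $x$ contained in this intersection (which exists since $x \in \overline{D}$) ensures $y$ and $z = 2x - y$ both lie in every such half-space simultaneously.

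As an alternative shortcut one could note that in the Tits semigroup of Proposition~\ref{Tits} the desired chamber is expressible as a product $A \cdot G$ for a suitable face $G$ on the opposite side of the hyperplanes through $A$, but the reflection argument above is the most direct and has the advantage of exhibiting $\tilde{D}_A$ by an explicit point, which will be convenient for later use. I do not anticipate a genuine obstacle; the whole argument is a careful bookkeeping of signs under the symmetry $y \mapsto 2x - y$.
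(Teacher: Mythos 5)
Your proof is correct and uses essentially the same geometric idea as the paper: the paper follows the ray from a point of $D$ through a point of $A$ and takes the first chamber it enters after $A$, while you describe the same chamber as the one containing the reflection $2x-y$ of a nearby point $y \in D$ through $x \in A$. Your explicit small-ball argument is a cleaner justification of what the paper encodes informally in the phrase ``another-first chamber.''
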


\begin{proof}
Take two points $x \in D$, and $y \in A$. From $x$, the ray $\big\{x(1-t) + yt\ \big|\ t \in \mathbb{R}_{\geq 0}\big\}$ successively meets the chamber $D$, the face $A$, and another-first chamber $C$:
\begin{itemize}
\item if $H \in \mathcal{A}_A$, then $\epsilon_H(C) = -\epsilon_H(D)$,
\item else, $\epsilon_H(C) = \epsilon_H(D) = \epsilon_H(A)$.
\end{itemize}
\end{proof}

\noindent Let $\mathcal{A}$ be a hyperplane arrangement, and $\mathrm{c}_{\mathcal{A}} := \min \{\dim F\ |\ F \in F_{\mathcal{A}}\}$. The rank of a face $F \in F_{\mathcal{A}}$ is $\mathrm{rk}\,F := \dim F - \mathrm{c}_{\mathcal{A}}$. Assign a variable $x_C$ to each chamber $C \in C_{\mathcal{A}}$.

\begin{proposition}  \label{Witt}
Let $\mathcal{A}$ be a hyperplane arrangement in $\mathbb{R}^n$, $D \in C_{\mathcal{A}}$, and $(A,D)$ a nested face. Then,
$$\sum_{F \in F_{\mathcal{A}}^{(A,D)}} (-1)^{\mathrm{rk}\,F} \sum_{\substack{C \in C_{\mathcal{A}} \\ FC = D}} x_C \, = \, (-1)^{\mathrm{rk}\,D} \sum_{\substack{C \in C_{\mathcal{A}} \\ AC = \tilde{D}_A}} x_C.$$
\end{proposition}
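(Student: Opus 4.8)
The identity to prove relates two sums of the chamber-variables $x_C$; the strategy is to reorganize the left-hand side by swapping the two summations and collecting, for each chamber $C$, the coefficient with which $x_C$ appears. Concretely, the left-hand side equals $\sum_{C \in C_{\mathcal{A}}} x_C \cdot \big(\sum_{F} (-1)^{\mathrm{rk}\,F}\big)$, where the inner sum ranges over faces $F \in F_{\mathcal{A}}^{(A,D)}$ satisfying $FC = D$. So the whole proposition reduces to the following purely combinatorial claim: for each $C \in C_{\mathcal{A}}$,
$$\sum_{\substack{F \in F_{\mathcal{A}}^{(A,D)} \\ FC = D}} (-1)^{\mathrm{rk}\,F} \;=\; \begin{cases} (-1)^{\mathrm{rk}\,D} & \text{if } AC = \tilde{D}_A, \\ 0 & \text{otherwise}. \end{cases}$$
I would prove this by identifying the index set $\{F \in F_{\mathcal{A}}^{(A,D)} \mid FC = D\}$ with the set of faces of a suitable subarrangement (or, better, with the faces contained in the closure of a single chamber of an auxiliary arrangement), and then recognizing the alternating sum $\sum (-1)^{\dim F}$ as an Euler characteristic of the type computed in Section~\ref{SecEul}.

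**Translating the condition $FC = D$.** Fix $C$. For $H \in \mathcal{A}$, the Tits product gives $\epsilon_H(FC) = \epsilon_H(F)$ when $\epsilon_H(F) \neq 0$, and $\epsilon_H(C)$ otherwise. The requirement $FC = D$ therefore forces: (i) on every hyperplane $H$ with $\epsilon_H(F) \neq 0$ we must have $\epsilon_H(F) = \epsilon_H(D)$ — but $F \preceq D$ already guarantees this — so (i) is automatic once $F \in F_{\mathcal{A}}^{(A,D)}$; and (ii) on every hyperplane $H$ with $\epsilon_H(F) = 0$ we must have $\epsilon_H(C) = \epsilon_H(D)$. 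Combined with $A \preceq F$ (so $\epsilon_H(F) = 0 \Rightarrow \epsilon_H(A) = 0$) and $F \preceq D$, the condition $FC = D$ is equivalent to: $F \in F_{\mathcal{A}}^{(A,D)}$ and $F$ is zero exactly on a subset of $\{H : \epsilon_H(C) = \epsilon_H(D)\}$. Meanwhile $AC = \tilde D_A$ means $C$ agrees with $D$ on all of $\mathcal{A}_A$ (equivalently, $C$ is zero-free and $A \preceq C$ fails only in the expected way) — precisely the case where the set of admissible $F$ is largest. When $AC \neq \tilde D_A$, some hyperplane $H_0 \in \mathcal{A}_A$ has $\epsilon_{H_0}(C) = \epsilon_{H_0}(D)$... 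I would need to sort out exactly which hyperplanes of $\mathcal{A}_A$ are "forbidden" to vanish on $F$ and show the resulting face poset is either the whole face set of the central arrangement $\mathcal{A}_A$ localized at $A$ (contractible star, Euler characteristic $(-1)^{\mathrm{rk}\,A - \mathrm{rk}\,D}\cdot(-1)^{\mathrm{rk}\,D}$ up to the sign bookkeeping) or a proper polyhedral piece thereof whose Euler characteristic vanishes by Lemma~\ref{ChM}.

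**Reducing to an Euler characteristic.** The faces $F$ with $A \preceq F \preceq D$ are in order-preserving bijection with the faces of the central arrangement $\mathcal{A}_A$ lying in the closure of the chamber $\overline{D}^{\,A}$ cut out by $D$ in $\mathcal{A}_A$; under this bijection $\dim F = \dim A + \dim(\text{image})$, so $(-1)^{\mathrm{rk}\,F}$ differs from $(-1)^{\dim(\text{image})}$ by the fixed sign $(-1)^{\dim A - \mathrm{c}_{\mathcal{A}}}$. The extra constraint (ii) says the image face is allowed to vanish only on a prescribed sub-collection of the hyperplanes of $\mathcal{A}_A$; equivalently, we remove from $\overline{D}^{\,A}$ the closed panels (and higher-codimension faces) corresponding to the forbidden hyperplanes, i.e. those $H \in \mathcal{A}_A$ with $\epsilon_H(C) \neq \epsilon_H(D)$. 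Thus the inner alternating sum is, up to a global sign, the Euler characteristic $\chi\big(\overline{D}^{\,A} \setminus \bigcup \overline{F_j}\big)$ of exactly the shape handled in Lemmas~\ref{Ch} and \ref{ChM}. If no hyperplane is forbidden (this is the case $AC = \tilde D_A$), we get $\chi(\overline{D}^{\,A}) = 1$ since a central-arrangement chamber closure is a cone, hence contractible; tracking signs yields $(-1)^{\mathrm{rk}\,D}$. If at least one is forbidden, $\{F_j\}_{j \in J}$ is a nonempty proper subset of the panels, and I must check the adjacency hypothesis of Lemma~\ref{ChM} holds (panels of a cone sharing codimension-two faces — true because the panel graph of a chamber of a central arrangement is connected) to conclude the Euler characteristic is $0$; the possibility $C \in C_{\mathcal{A}_A}^{(1)}$ with a bounded union cannot occur here because $\overline{D}^{\,A}$ is a cone with apex the lineality space, so all its panels are unbounded.

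**Main obstacle.** The delicate point is the sign/dimension bookkeeping when passing between $\mathcal{A}$, the localization $\mathcal{A}_A$, and the various notions of rank, together with verifying that the "forbidden panels" really do form a set to which Lemma~\ref{ChM} applies — in particular that the set $\{F_j\}$ is never equal to the full panel set $P_C$ (which would happen precisely when $AC = \tilde D_A$ fails in the strongest possible way, and there the union of all closed panels is still the full boundary of the cone, homeomorphic to $\partial B^{n-1}\times\mathbb{R}^{\ell}$ or similar, so its complement's Euler characteristic still vanishes — this edge case may need to be carved out separately) and that the adjacency condition is automatic. Once the dictionary "faces $F$ with $FC=D$" $\leftrightarrow$ "faces of a chamber-closure-minus-some-panels in the central arrangement $\mathcal{A}_A$" is nailed down, the result falls out of Section~\ref{SecEul}.
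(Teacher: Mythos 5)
Your overall framework matches the paper's: swap the two sums, reduce the statement to evaluating, for each fixed chamber $C$, the signed count $\sum_{\substack{F \in F_{\mathcal{A}}^{(A,D)} \\ FC=D}} (-1)^{\mathrm{rk}\,F}$, and then identify that count as a combinatorial Euler characteristic in the localized central arrangement $\mathcal{A}_A$ via the bijection $b$. That is exactly the paper's route, and your translation of $FC=D$ into ``$F$ may vanish only on hyperplanes where $C$ and $D$ agree'' is correct, as is your observation that the panels of $\overline{b(D)}$ are unbounded so the bounded subcase of Lemma~\ref{ChM} cannot arise.

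However, there is a sign error that scrambles the case analysis, together with a misconception about which Euler characteristic is being used. For $H\in\mathcal{A}_A$ one has $\epsilon_H(\tilde D_A)=-\epsilon_H(D)$, so $AC=\tilde D_A$ means $C$ \emph{disagrees} with $D$ on every hyperplane of $\mathcal{A}_A$, not ``agrees'' as you write. Consequently $AC=\tilde D_A$ is the case where \emph{every} hyperplane of $\mathcal{A}_A$ is forbidden, the admissible set is as small as possible (just $F=D$), and the coefficient $(-1)^{\mathrm{rk}\,D}$ appears trivially, with no topology needed. Conversely, the case of \emph{no} forbidden hyperplanes is $\epsilon_{\mathcal{A}_A}(C)=\epsilon_{\mathcal{A}_A}(D)$, and there the coefficient equals $\chi\big(\overline{b(D)}\big)$, which is $0$ by Lemma~\ref{Ch} because $b(D)$ is a type-$1$ chamber of $\mathcal{A}_A$. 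Your claim that this Euler characteristic equals $1$ ``since a cone is contractible'' confuses the homotopy Euler characteristic with the cell-count $\chi$ of Section~\ref{SecEul}; the latter is not a homotopy invariant on non-compact spaces, and for the unbounded closed cone $\overline{b(D)}$ it is $0$, not $1$. Fixing these two confusions recovers the paper's three-way split: all of $\mathcal{A}_A$ forbidden ($AC=\tilde D_A$, coefficient $(-1)^{\mathrm{rk}\,D}$); none forbidden (coefficient $0$ by Lemma~\ref{Ch}); a nonempty proper subset forbidden (coefficient $0$ by Lemma~\ref{ChM}). This also dissolves the edge case you worried about, where the forbidden set equals the full panel set $P_{b(D)}$: that is precisely the first case, resolved without invoking Lemma~\ref{ChM} at all.
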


\begin{proof}
We have $\displaystyle \sum_{F \in F_{\mathcal{A}}^{(A,D)}} (-1)^{\mathrm{rk}\,F} \sum_{\substack{C \in C_{\mathcal{A}} \\ FC = D}} x_C = \sum_{C \in C_{\mathcal{A}}} \Big( \sum_{\substack{F \in F_{\mathcal{A}}^{(A,D)} \\ FC = D}} (-1)^{\mathrm{rk}\,F} \Big) x_C$. Note that
$$F_{\mathcal{A}}^{(A,D)} = \big\{F \in F_{\mathcal{A}}\ |\ F \preceq D,\, \epsilon_{\mathcal{A} \setminus \mathcal{A}_A}(F) = \epsilon_{\mathcal{A} \setminus \mathcal{A}_A}(A)\big\}.$$
\begin{itemize}
\item If $\epsilon_{\mathcal{A}_A}(C) = \epsilon_{\mathcal{A}_A}(\tilde{D}_A)$, then $\displaystyle \sum_{\substack{F \in F_{\mathcal{A}}^{(A,D)} \\ FC = D}} (-1)^{\mathrm{rk}\,F} = (-1)^{\mathrm{rk}\,D}$.
\end{itemize}

\noindent Define the bijection $b: \{F \in F_{\mathcal{A}}\ |\ A \preceq F\} \rightarrow F_{\mathcal{A}_A}$ such that $\epsilon_{\mathcal{A}_A}\big(b(F)\big) = \epsilon_{\mathcal{A}_A}(F)$.

\begin{itemize}
\item If $\epsilon_{\mathcal{A}_A}(C) = \epsilon_{\mathcal{A}_A}(D)$, then
\begin{align*}
\sum_{\substack{F \in F_{\mathcal{A}}^{(A,D)} \\ FC = D}} (-1)^{\mathrm{rk}\,F} & = (-1)^{- \mathrm{c}_{\mathcal{A}}} \sum_{F \in F_{\mathcal{A}}^{(A,D)}} (-1)^{\dim F} \\
& = (-1)^{\mathrm{c}_{\mathcal{A}}} \sum_{F \in b\big(F_{\mathcal{A}}^{(A,D)}\big)} (-1)^{\dim F} \\
& = (-1)^{\mathrm{c}_{\mathcal{A}}} \chi\Big(\overline{b(D)}\Big) \\
& = 0 \quad \text{from Lemma \ref{Ch} as $b(D)$ is a chamber of type $1$}.
\end{align*}
\item The case $\epsilon_{\mathcal{A}_A}(C) \notin \big\{\epsilon_{\mathcal{A}_A}(D), \epsilon_{\mathcal{A}_A}(\tilde{D}_A)\big\}$ remains. That condition imposes $\#\mathcal{A}_A > 1$. Assume that, for every $H \in \mathcal{A}_A$, $\epsilon_{H}(D) = +$, and define the hyperplane arrangement $\mathcal{A}_A^C := \{H \in \mathcal{A}_A\ |\ \epsilon_{H}(C) = -\}$. Remark that if $\#\mathcal{A}_A^C > 1$, then $$\forall H \in \mathcal{A}_A^C,\, \exists H' \in \mathcal{A}_A^C \setminus \{H\}:\ \dim \overline{H} \cap \overline{H'} = n-2.$$
We obtain,
\begin{align*}
\sum_{\substack{F \in F_{\mathcal{A}}^{(A,D)} \\ FC = D}} (-1)^{\mathrm{rk}\,F} & = (-1)^{- \mathrm{c}_{\mathcal{A}}} \sum_{\substack{F \in F_{\mathcal{A}}^{(A,D)} \\ \forall H \in \mathcal{A}_A^C:\ \epsilon_H(F) = +}} (-1)^{\dim F} \\
& = (-1)^{\mathrm{c}_{\mathcal{A}}} \sum_{\substack{F \in b\big(F_{\mathcal{A}}^{(A,D)}\big) \\ \forall H \in \mathcal{A}_A^C:\ \epsilon_H(F) = +}} (-1)^{\dim F} \\
& = (-1)^{\mathrm{c}_{\mathcal{A}}} \chi\Big(\overline{b(D)} \setminus \bigcup_{H \in \mathcal{A}_A^C} \overline{b(D)} \cap H\Big) \\
& = 0 \quad \text{using Lemma \ref{ChM} with $\displaystyle \bigcup_{H \in \mathcal{A}_A^C} \overline{b(D)} \cap H$ unbounded}.
\end{align*}
\end{itemize}
So $\displaystyle \sum_{F \in F_{\mathcal{A}}^{(A,D)}} (-1)^{\mathrm{rk}\,F} \sum_{\substack{C \in C_{\mathcal{A}} \\ FC = D}} x_C \, = \, (-1)^{\mathrm{rk}\,D} \sum_{\substack{C \in C_{\mathcal{A}} \\ \epsilon_{\mathcal{A}_A}(C) \, = \, \epsilon_{\mathcal{A}_A}(\tilde{D}_A)}} x_C \, = \, (-1)^{\mathrm{rk}\,D} \sum_{\substack{C \in C_{\mathcal{A}} \\ AC = \tilde{D}_A}} x_C$.
\end{proof}

\noindent The set of faces composing the closure of a chamber $D \in C_{\mathcal{A}}$ is $F_{\overline{D}} := \{F \in F_{\mathcal{A}}\ |\ F \preceq D\}$.

\begin{proposition}   \label{Witt2}
Let $\mathcal{A}$ be a hyperplane arrangement in $\mathbb{R}^n$, and $D \in \breve{C}_{\mathcal{A}} \cup C_{\mathcal{A}}^{(2)} \cup C_{\mathcal{A}}^{(3)}$. Then,
$$\sum_{F \in F_{\overline{D}}} (-1)^{\mathrm{rk}\,F} \sum_{\substack{C \in C_{\mathcal{A}} \\ FC = D}} x_C \, = \, \begin{cases}
(-1)^{\mathrm{c}_{\mathcal{A}}} x_D & \text{if} \quad D \in \breve{C}_{\mathcal{A}}, \\
(-1)^{1 + \mathrm{c}_{\mathcal{A}}} x_D & \text{if} \quad D \in C_{\mathcal{A}}^{(2)}, \\
(-1)^{\mathrm{c}_{\mathcal{A}} + n-1} x_D & \text{if} \quad D \in C_{\mathcal{A}}^{(3)}. \end{cases}$$
\end{proposition}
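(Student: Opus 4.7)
The plan is to mirror the strategy of Proposition~\ref{Witt}: interchange the order of summation in the left-hand side, interpret the coefficient of each $x_C$ as a signed Euler characteristic of a subcomplex of $\overline{D}$, and then evaluate using Lemmas~\ref{Ch} and~\ref{ChM}.

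After swapping the sums, the coefficient of $x_C$ is $\sum_{F \preceq D,\, FC=D} (-1)^{\mathrm{rk}\,F}$. For $F \preceq D$ the condition $FC = D$ amounts to $\epsilon_H(F) \neq 0$ for every $H \in \mathcal{A}^C := \{H \in \mathcal{A} : \epsilon_H(C) = -\epsilon_H(D)\}$, so the coefficient equals
$$(-1)^{\mathrm{c}_{\mathcal{A}}} \chi\!\left(\overline{D} \setminus \bigcup_{H \in \mathcal{A}^C} \overline{D} \cap H\right).$$
When $C = D$ one has $\mathcal{A}^C = \emptyset$, so this reduces to $(-1)^{\mathrm{c}_{\mathcal{A}}} \chi(\overline{D})$. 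Lemma~\ref{Ch} then gives $\chi(\overline{D})$ equal to $1$, $-1$, or $(-1)^{n-1}$ according to whether $D$ belongs to $\breve{C}_{\mathcal{A}}$, $C_{\mathcal{A}}^{(2)}$, or $C_{\mathcal{A}}^{(3)}$, producing exactly the three coefficients of $x_D$ announced in the proposition.

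For the off-diagonal case $C \neq D$, the goal is to show that the coefficient vanishes. Let $\{F_j\}_{j \in J}$ denote the set of panels of $D$ contained in some hyperplane of $\mathcal{A}^C$. I would first identify $\bigcup_{H \in \mathcal{A}^C} \overline{D} \cap H$ with $\bigcup_{j \in J} \overline{F_j}$: any $H \in \mathcal{A}^C$ for which $\overline{D} \cap H$ has codimension at least $2$ must, by realizability of $C$, have its intersection absorbed into the closure of a panel coming from another hyperplane of $\mathcal{A}^C$. Then I would verify the hypotheses of Lemma~\ref{ChM}: the set $\{F_j\}$ is nonempty because the first step of a shortest gallery from $D$ to $C$ exits $D$ through such a panel; the inequality $\{F_j\} \neq P_D$ holds because no chamber can lie on the opposite side of every panel-hyperplane of $D$ when $D \notin C_{\mathcal{A}}^{(1)}$; and the required codimension-$2$ adjacency between consecutive $F_j$ reflects the topological connectedness of the subcomplex of panels of $\overline{D}$ visible from $C$. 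Since $D$ is not of type~$1$, Lemma~\ref{ChM} then delivers Euler characteristic~$0$.

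The hardest part will be these two geometric ingredients, namely the absorption of codimension-$\geq 2$ contributions and the codimension-$2$ adjacency among visible panels; both are automatic in the central subarrangement $\mathcal{A}_A$ used in Proposition~\ref{Witt}, where any two distinct hyperplanes already intersect in codimension $2$, but both demand careful justification for the general polyhedron $\overline{D}$ treated here.
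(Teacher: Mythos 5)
Your proposal follows the same route as the paper's proof: interchange the two sums, rewrite the coefficient of $x_C$ as $(-1)^{\mathrm{c}_{\mathcal{A}}}\chi\bigl(\overline{D}\setminus\bigcup_{H\in\mathcal{A}_{C,D}}\overline{D}\cap H\bigr)$, then read off the diagonal term via Lemma~\ref{Ch} and kill the off-diagonal terms via Lemma~\ref{ChM}. The paper likewise glosses over the two geometric points you flag -- it simply ``remarks'' that the hyperplanes of $\mathcal{A}_{C,D}$ pairwise meet in codimension $2$ and does not separately address hyperplanes meeting $\overline{D}$ only in codimension $\geq 2$ -- so your proposal matches it both in method and in the places where full detail is deferred.
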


\begin{proof}
We have $\displaystyle \sum_{F \in F_{\overline{D}}} (-1)^{\mathrm{rk}\,F} \sum_{\substack{C \in C_{\mathcal{A}} \\ FC = D}} x_C = \sum_{C \in C_{\mathcal{A}}} \Big( \sum_{\substack{F \in F_{\overline{D}} \\ FC = D}} (-1)^{\mathrm{rk}\,F} \Big) x_C.$ If $C \neq D$, define the hyperplane arrangement $\mathcal{A}_{C,D} := \{H \in \mathcal{A}\ |\ \epsilon_{H}(C) \neq \epsilon_{H}(D)\}$. Remark that if $\#\mathcal{A}_{C,D} > 1$, then $$\forall H \in \mathcal{A}_{C,D},\, \exists H' \in \mathcal{A}_{C,D} \setminus \{H\}:\ \dim \overline{H} \cap \overline{H'} = n-2.$$ We obtain
\begin{align*}
\sum_{\substack{F \in F_{\overline{D}} \\ FC = D}} (-1)^{\mathrm{rk}\,F} & = (-1)^{\mathrm{c}_{\mathcal{A}}} \sum_{\substack{F \in F_{\overline{D}} \\ \forall H \in \mathcal{A}_{C,D}:\ \epsilon_H(F) \neq 0}} (-1)^{\dim F} \\
& = (-1)^{\mathrm{c}_{\mathcal{A}}} \chi\Big(\overline{D} \setminus \bigcup_{H \in \mathcal{A}_{C,D}} \overline{D} \cap H\Big) \\
& = 0 \quad \text{from Lemma \ref{ChM}}.
\end{align*}
If $C = D$, then $\displaystyle \sum_{\substack{F \in F_{\overline{D}} \\ FC = D}} (-1)^{\mathrm{rk}\,F} = (-1)^{\mathrm{c}_{\mathcal{A}}} \chi(\overline{D}) = \begin{cases}
(-1)^{\mathrm{c}_{\mathcal{A}}} & \text{if} \quad D \in \breve{C}_{\mathcal{A}}, \\
(-1)^{1 + \mathrm{c}_{\mathcal{A}}} & \text{if} \quad D \in C_{\mathcal{A}}^{(2)}, \\
(-1)^{\mathrm{c}_{\mathcal{A}} + n-1} & \text{if} \quad D \in C_{\mathcal{A}}^{(3)}. \end{cases}$
\end{proof}

\section{The Varchenko Matrix for Apartments}  \label{SecVar}

\noindent We prove the main result in this section. For that, we principally use the extensions to affine hyperplane arrangements of the maps $\gamma_{\mathcal{A}}$ and $\mathrm{m}$ defined by Aguiar and Mahajan \cite[§ 8.4]{AgMa}.

\begin{lemma} \label{vCFD}
Let $\mathcal{A}$ be a hyperplane arrangement in $\mathbb{R}^n$, $C, D \in C_{\mathcal{A}}$, and $F \preceq C$. Then, $$\mathrm{v}(C,D) = \mathrm{v}(C,FD) \, \mathrm{v}(FD,D).$$
\end{lemma}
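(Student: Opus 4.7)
The plan is to verify the identity one hyperplane at a time, since each of $\mathrm{v}(C,D)$, $\mathrm{v}(C,FD)$, $\mathrm{v}(FD,D)$ is a product over $H \in \mathcal{A}$ of a factor depending only on the signs $\epsilon_H$ of the relevant faces at $H$. First I would check that $FD$ is actually a chamber, so that the product formula applies: for every $H$, either $\epsilon_H(F) \neq 0$, in which case $\epsilon_H(FD) = \epsilon_H(F) \in \{+,-\}$, or $\epsilon_H(F) = 0$, in which case $\epsilon_H(FD) = \epsilon_H(D) \in \{+,-\}$ because $D$ is a chamber.

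Next I would carry out the case split on $\epsilon_H(F)$, which is precisely the case split in the definition of the Tits product. If $\epsilon_H(F) \neq 0$, then $F \preceq C$ forces $\epsilon_H(F) = \epsilon_H(C)$, and therefore $\epsilon_H(FD) = \epsilon_H(C)$; so $H$ contributes $1$ to $\mathrm{v}(C,FD)$, while its contributions to $\mathrm{v}(C,D)$ and to $\mathrm{v}(FD,D)$ both reduce to $h_H^{\epsilon_H(C)}$ if $\epsilon_H(C) \neq \epsilon_H(D)$ and to $1$ otherwise. If instead $\epsilon_H(F) = 0$, then $\epsilon_H(FD) = \epsilon_H(D)$, so $H$ contributes $1$ to $\mathrm{v}(FD,D)$, and the contributions to $\mathrm{v}(C,D)$ and $\mathrm{v}(C,FD)$ coincide, equal to $h_H^{\epsilon_H(C)}$ when $\epsilon_H(C) \neq \epsilon_H(D)$ and to $1$ otherwise. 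Multiplying these per-hyperplane equalities gives the identity; the degenerate possibilities $FD = C$ or $FD = D$ fall out automatically since $\mathrm{v}(C,C) = 1$ is the empty product in that formula.

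The same computation admits a geometric reading using the path $\mathrm{p}$ from the proof of Proposition~\ref{Tits}, as promised in the remark at the end of Section~\ref{SecTi}: choose $x \in C$, $p \in F$, and $y \in D$, and concatenate a segment from $x$ to $p$ inside $\overline{C}$ with the segment $\mathrm{p}$ from $p$ to $y$. By that proposition, the first chamber met when leaving $p$ toward $y$ is $FD$, and the hyperplanes crossed by the concatenated path partition cleanly into those separating $C$ from $FD$ and those separating $FD$ from $D$, which is exactly what the two cases above encode. I do not anticipate any real obstacle here: the content of the lemma is purely the bookkeeping identity above, and the only point that needs a moment of care is confirming that $FD$ is a chamber so that the formula for $\mathrm{v}$ applies on both factors of the right-hand side.
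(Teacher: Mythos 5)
Your proof is correct, and it takes a genuinely different (and arguably cleaner) route than the paper's. The paper argues geometrically: it reuses the directed segment $\mathrm{p}$ from the proof of Proposition~\ref{Tits} to identify $FD$ as the first chamber met when travelling from $F$ toward $D$, and then partitions the separating set $\mathscr{H}(C,D)$ into $\mathscr{H}(C,FD) \sqcup \mathscr{H}(FD,D)$ by grouping hyperplanes according to whether they are crossed before or after entering $FD$. You instead verify the identity hyperplane by hyperplane, directly from the definitions of $\preceq$, the Tits product, and $\mathrm{v}$: the case split on $\epsilon_H(F) \in \{0\} \cup \{+,-\}$ mirrors exactly the case split in the definition of $FD$, and in each case one of the two right-hand factors contributes trivially at $H$ while the other reproduces the contribution of $\mathrm{v}(C,D)$. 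Your approach is more elementary and self-contained (it never leaves the sign-vector language), whereas the paper's approach is chosen deliberately to exhibit a geometric picture and to justify the earlier decision to prove Proposition~\ref{Tits} via the path $\mathrm{p}$ rather than quoting the COM literature. Both are sound; your per-hyperplane bookkeeping also has the small advantage of making the degenerate cases $FD = C$ or $FD = D$ disappear without comment, and it would transfer verbatim to the abstract COM setting mentioned in the paper's closing remark.
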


\begin{proof} From the proof of Proposition \ref{Tits}, we know that, if $\mathrm{p}(t)$ is the directed line-segment from a point $x \in F$ to a point $y \in D$, then $FD$ is the first chamber $F_1$ that $\mathrm{p}(t)$ meets. 
\begin{itemize}
\item If $F_1 = C$, then $\mathscr{H}(C,F_1) \sqcup \mathscr{H}(F_1,D) = \mathscr{H}(C,C) \sqcup \mathscr{H}(C,D) = \mathscr{H}(C,D)$.
\item Else, if $\mathcal{A}' = \{H \in \mathcal{A}\ |\ \overline{C} \cap \overline{F_1} \subseteq H\}$, then $$\mathscr{H}(C,F_1) \sqcup \mathscr{H}(F_1,D) = \{H^{\epsilon_H(C)}\ |\ H \in \mathcal{A}'\} \sqcup \big\{H^{\epsilon_H(F_1)}\ \big|\ H \in \mathcal{A} \setminus \mathcal{A}',\, \epsilon_H(F_1) \neq \epsilon_H(D)\big\} = \mathscr{H}(C,D).$$
\end{itemize}
\end{proof}

\noindent The module of $R_{\mathcal{A}}$--linear combinations of chambers in $C_{\mathcal{A}}$ is
$\displaystyle M_{\mathcal{A}} := \Big\{\sum_{C \in C_{\mathcal{A}}} x_C C\ \Big|\ x_C \in R_{\mathcal{A}}\Big\}$.

\noindent Let $\{C^*\}_{C \in C_{\mathcal{A}}}$ be the dual basis of the basis $C_{\mathcal{A}}$ of $M_{\mathcal{A}}$. Define the linear map $\gamma_{\mathcal{A}}: M_{\mathcal{A}} \rightarrow M_{\mathcal{A}}^*$, for a chamber $D \in C_{\mathcal{A}}$, by
$$\gamma_{\mathcal{A}}(D) := \sum_{C \in C_{\mathcal{A}}} \mathrm{v}(D,C)\, C^*.$$

\noindent For a nested face $(A,D)$, where $D$ is a chamber, let $\displaystyle \mathrm{m}(A,D) := \sum_{\substack{C \in C_{\mathcal{A}} \\ AC = D}} \mathrm{v}(D,C)\, C^* \in M_{\mathcal{A}}^*$.

\noindent Define the extension ring $\displaystyle B_{\mathcal{A}} := \bigg\{\frac{p}{\displaystyle \prod_{F \in F_{\mathcal{A}}  \setminus C_{\mathcal{A}}} (1 - \mathrm{b}_F)^{k_F}}\ \bigg|\ p \in R_{\mathcal{A}},\, k_F \in \mathbb{N}\bigg\}$ of $R_{\mathcal{A}}$ from the weights of the faces in $F_{\mathcal{A}}$.

\begin{proposition} \label{mAD}
Let $\mathcal{A}$ be a hyperplane arrangement in $\mathbb{R}^n$, $D \in C_{\mathcal{A}}$, and $(A,D)$ a nested face. Then, $$\mathrm{m}(A,D) = \sum_{C \in C_{\mathcal{A}}} x_C \, \gamma_{\mathcal{A}}(C) \quad \text{with} \quad x_C \in B_{\mathcal{A}}.$$
\end{proposition}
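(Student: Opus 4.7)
The plan is to induct on the nonnegative integer $r := \mathrm{rk}\,D - \mathrm{rk}\,A$. When $r = 0$ one has $A = D$, and since $D$ is a chamber $DC = D$ for every $C \in C_{\mathcal{A}}$; hence $\mathrm{m}(D,D) = \sum_C \mathrm{v}(D,C) C^* = \gamma_{\mathcal{A}}(D)$, settling the base. For $r \geq 1$, I would first apply Proposition~\ref{Witt} to the nested face $(A,D)$ with the formal variables instantiated as $x_C := \mathrm{v}(D,C) C^*$. Isolating the $F = A$ term on the left-hand side turns that identity into
$$\mathrm{m}(A,D) \;=\; (-1)^{\mathrm{rk}\,D - \mathrm{rk}\,A}\, m_A\, \mathrm{m}(A, \tilde{D}_A) \;-\! \sum_{F \in F_{\mathcal{A}}^{(A,D)} \setminus \{A\}} (-1)^{\mathrm{rk}\,F - \mathrm{rk}\,A}\, \mathrm{m}(F, D),$$
where $m_A := \prod_{H \in \mathcal{A}_A} h_H^{\epsilon_H(D)}$ appears from a direct half-space comparison: for every $C$ with $AC = \tilde{D}_A$ one has $\epsilon_H(C) = -\epsilon_H(D) = \epsilon_H(\tilde{D}_A)$ for $H \in \mathcal{A}_A$, whereas $\epsilon_H(D) = \epsilon_H(\tilde{D}_A)$ for $H \notin \mathcal{A}_A$, so $\mathscr{H}(D,C) = \mathscr{H}(\tilde{D}_A, C) \sqcup \{H^{\epsilon_H(D)} : H \in \mathcal{A}_A\}$ and consequently $\mathrm{v}(D,C) = m_A\, \mathrm{v}(\tilde{D}_A, C)$.

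The second key step is to apply the very same identity to the nested face $(A, \tilde{D}_A)$, exploiting that the twist at $A$ is an involution sending $\tilde{D}_A$ back to $D$. The resulting twist constant is $m_A' := \prod_{H \in \mathcal{A}_A} h_H^{-\epsilon_H(D)}$, and it satisfies $m_A m_A' = \prod_{H \in \mathcal{A}_A} h_H^+ h_H^- = \mathrm{b}_A$. Substituting this second expansion into the first makes the coefficient of $\mathrm{m}(A, D)$ collapse to $1 - m_A m_A' = 1 - \mathrm{b}_A$, while the remaining right-hand side is an $R_{\mathcal{A}}$-linear combination of $\mathrm{m}(F, D)$ for $A \prec F \preceq D$ and of $\mathrm{m}(F, \tilde{D}_A)$ for $A \prec F \preceq \tilde{D}_A$. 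For every such $F$ one has $\mathrm{rk}\,F > \mathrm{rk}\,A$, so $\mathrm{rk}\,D - \mathrm{rk}\,F < r$ and $\mathrm{rk}\,\tilde{D}_A - \mathrm{rk}\,F < r$; the induction hypothesis therefore expresses each of these $\mathrm{m}(F, \cdot)$ as an element of $\sum_C B_{\mathcal{A}}\, \gamma_{\mathcal{A}}(C)$. Since $A \in F_{\mathcal{A}} \setminus C_{\mathcal{A}}$, the factor $1 - \mathrm{b}_A$ is invertible in $B_{\mathcal{A}}$; dividing through concludes the induction.

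I expect the main obstacle to be the two interlocking calculational checks: verifying the clean factorization $\mathrm{v}(D,C) = m_A\, \mathrm{v}(\tilde{D}_A, C)$ for every $C$ satisfying $AC = \tilde{D}_A$, and confirming that after the two Witt expansions are combined the coefficient of $\mathrm{m}(A,D)$ reduces exactly to $1 - \mathrm{b}_A$. This second identity is what forces the denominators occurring in the final formula to be precisely the $1 - \mathrm{b}_F$ built into the definition of $B_{\mathcal{A}}$, and it is the computation that makes Theorem~\ref{MTh2} inevitable once this proposition is in hand.
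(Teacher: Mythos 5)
Your proof is correct and follows essentially the same route as the paper: you instantiate the generalized Witt identity of Proposition~\ref{Witt} at $x_C = \mathrm{v}(D,C)C^*$, do the same for the twisted pair $(A,\tilde D_A)$, combine the two equations so that the coefficient $1-\mathrm{b}_A$ of $\mathrm{m}(A,D)$ emerges, and then close an induction on $\mathrm{rk}\,D-\mathrm{rk}\,A$. The only cosmetic differences are that the paper invokes Lemma~\ref{vCFD} to get $\mathrm{v}(D,C)=\mathrm{v}(D,\tilde D_A)\,\mathrm{v}(\tilde D_A,C)$ for $AC=\tilde D_A$, where you re-derive this by a direct half-space comparison, and your explicit choice of induction parameter $r=\mathrm{rk}\,D-\mathrm{rk}\,A$ (covering all chambers at once) makes the appeal to the inductive hypothesis for the $\mathrm{m}(F,\tilde D_A)$ terms slightly cleaner than the paper's informal "backward induction."
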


\begin{proof}
The backward induction proof is inspired by a part of the proof of \cite[Proposition~8.13]{AgMa}. We obviously have $\mathrm{m}(D,D) = \gamma_{\mathcal{A}}(D)$. The generalized Witt identity in Proposition \ref{Witt} applied to $x_C = \mathrm{v}(D,C)\, C^*$ in addition to Lemma \ref{vCFD} yield
$$\sum_{F \in F_{\mathcal{A}}^{(A,D)}} (-1)^{\mathrm{rk}\,F} \mathrm{m}(F,D) = (-1)^{\mathrm{rk}\,D} \sum_{\substack{C \in C_{\mathcal{A}} \\ AC = \tilde{D}_A}} \mathrm{v}(D,C)\, C^* = (-1)^{\mathrm{rk}\,D} \, \mathrm{v}(D,\tilde{D}_A) \, \mathrm{m}(A,\tilde{D}_A).$$
Hence, $\displaystyle \mathrm{m}(A,D) - (-1)^{\mathrm{rk}\,D - \mathrm{rk}\,A} \, \mathrm{v}(D,\tilde{D}_A) \, \mathrm{m}(A,\tilde{D}_A) = \sum_{F \in F_{\mathcal{A}}^{(A,D)} \setminus \{A\}} (-1)^{\mathrm{rk}\,F - \mathrm{rk}\,A +1} \mathrm{m}(F,D)$.
By induction hypothesis, for every $C \in C_{\mathcal{A}}$, there exists $a_C \in B_{\mathcal{A}}$, such that $$\sum_{F \in F_{\mathcal{A}}^{(A,D)} \setminus \{A\}} (-1)^{\mathrm{rk}\,F - \mathrm{rk}\,A +1} \mathrm{m}(F,D) = \sum_{C \in C_{\mathcal{A}}} a_C \, \gamma_{\mathcal{A}}(C).$$
Since $A \preceq \tilde{D}_A$ and $A\widetilde{\tilde{D}_A} = D$, by replacing $D$ with $\tilde{D}_A$, there exists also $e_C \in B_{\mathcal{A}}$ for every $C \in C_{\mathcal{A}}$ such that
$$\displaystyle \mathrm{m}(A, \tilde{D}_A) - (-1)^{\mathrm{rk}\,\tilde{D}_A - \mathrm{rk}\,A} \, \mathrm{v}(\tilde{D}_A, D) \, \mathrm{m}(A,D) = \sum_{C \in C_{\mathcal{A}}} e_C \, \gamma_{\mathcal{A}}(C).$$
Therefore, $\displaystyle \mathrm{m}(A,D) = \sum_{C \in C_{\mathcal{A}}} \frac{a_C + (-1)^{\mathrm{rk}\,D - \mathrm{rk}\,A} \, \mathrm{v}(D, \tilde{D}_A) \, e_C}{1 - \mathrm{b}_A} \gamma_{\mathcal{A}}(C)$.
\end{proof}

\begin{theorem} \label{D*}
Let $\mathcal{A}$ be a hyperplane arrangement in $\mathbb{R}^n$, and $D \in C_{\mathcal{A}}$. Then,
$$D^* = \sum_{C \in C_{\mathcal{A}}} x_C \, \gamma_{\mathcal{A}}(C) \quad \text{with} \quad x_C \in B_{\mathcal{A}}.$$
\end{theorem}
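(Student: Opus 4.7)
The plan is to apply Proposition~\ref{Witt2} with the substitution $x_C := \mathrm{v}(D,C)\,C^*$. Under this substitution the inner sum $\sum_{C:\,FC=D} x_C$ becomes exactly $\mathrm{m}(F,D)$, so the Witt-type identity reads
\[
\sum_{F\in F_{\overline{D}}}(-1)^{\mathrm{rk}\,F}\,\mathrm{m}(F,D)\;=\;\varepsilon_D\,D^*,
\]
with a nonzero sign $\varepsilon_D\in\{\pm 1\}$ dictated by $\mathrm{c}_{\mathcal{A}}$, $n$, and the topological type of $D$ via Lemma~\ref{Ch}.

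Proposition~\ref{mAD} then writes each $\mathrm{m}(F,D)$ with $F\prec D$ as a $B_{\mathcal{A}}$-linear combination of the vectors $\gamma_{\mathcal{A}}(C)$, while the boundary term $\mathrm{m}(D,D)$ equals $\gamma_{\mathcal{A}}(D)$ outright, since $DC=D$ for every chamber $C$. So the alternating sum above is already in the $B_{\mathcal{A}}$-span of $\{\gamma_{\mathcal{A}}(C)\}$, and dividing through by $\varepsilon_D$ presents $D^*$ in the desired form. This settles the theorem whenever $D\in\breve{C}_{\mathcal{A}}\cup C_{\mathcal{A}}^{(2)}\cup C_{\mathcal{A}}^{(3)}$.

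The main obstacle is the case $D\in C_{\mathcal{A}}^{(1)}$: here $\chi(\overline{D})=0$, and the alternating sum over $F\preceq D$ carries no $D^*$ component, so Proposition~\ref{Witt2} is silent. To handle this case I would single out a face $F\preceq D$ of sufficiently high codimension and exploit Proposition~\ref{mAD} directly, which gives
\[
\mathrm{m}(F,D)\;=\;D^*\,+\sum_{\substack{C\in C_{\mathcal{A}}\setminus\{D\}\\FC=D}}\mathrm{v}(D,C)\,C^*\;\in\;\sum_{C\in C_{\mathcal{A}}}B_{\mathcal{A}}\,\gamma_{\mathcal{A}}(C).
\]
So $D^*$ lies in that $B_{\mathcal{A}}$-span as soon as the finitely many $C^*$ appearing on the right already do. The delicate step is to choose $F$ so that those companion chambers are strictly simpler than $D$ for some inductive measure—for instance, the length of a shortest gallery from $D$ to a chamber of type $0$, $2$, or $3$, with the base case provided by the previous paragraph—or, failing that, to combine identities from Proposition~\ref{Witt} applied to nested pairs $(A,D)$ whose reflected chamber $\tilde{D}_A$ is not of type~$1$, and to check that the resulting linear system has coefficients invertible in $B_{\mathcal{A}}$, namely products of factors $1-\mathrm{b}_F$.
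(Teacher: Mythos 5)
Your handling of the case $D\in\breve{C}_{\mathcal{A}}\cup C_{\mathcal{A}}^{(2)}\cup C_{\mathcal{A}}^{(3)}$ matches the paper exactly: substitute $x_C=\mathrm{v}(D,C)\,C^*$ into Proposition~\ref{Witt2}, identify the inner sums with $\mathrm{m}(F,D)$, and then feed the result into Proposition~\ref{mAD}. The problem is the case $D\in C_{\mathcal{A}}^{(1)}$, which you correctly identify as the obstacle but for which neither of your two suggested strategies closes the gap.

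Your first strategy (pick a face $F\preceq D$, write $\mathrm{m}(F,D)=D^*+\sum_{C\ne D,\ FC=D}\mathrm{v}(D,C)\,C^*$, and induct on some gallery distance to a chamber of type $0$, $2$, or $3$) fails because the induction may have no base case. Take two non-parallel affine lines in $\mathbb{R}^2$: all four chambers are sectors, hence all of type $1$, and there are no bounded chambers and no type $2$ or $3$ chambers at all. Your second strategy (combine Proposition~\ref{Witt} over nested pairs $(A,D)$ with $\tilde D_A$ not of type $1$) breaks down in the same example, since for any face $A\preceq D$ the reflected chamber $\tilde D_A$ is again a sector and hence again of type $1$; there is simply no anchor within the arrangement itself. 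Moreover, even when such anchors exist, you would still need to verify that the linear system you set up is solvable over $B_{\mathcal{A}}$, and you give no argument for that invertibility.

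The paper's proof escapes this by leaving the arrangement: it adjoins an auxiliary hyperplane $H'$ to form $\mathcal{A}'=\mathcal{A}\sqcup\{H'\}$, chosen so that $H'$ slices $D$ into a piece $D_b$ that is bounded or of type $2$ or $3$ in $\mathcal{A}'$, misses every chamber of $\mathcal{A}$ that was already bounded or of type $2$/$3$, and places all uncut chambers and all "bounded halves" on one fixed side $H'^+$. It then applies the already-proved case to $D_b$ inside $\mathcal{A}'$, splits the resulting expansion according to whether chambers lie in $H'^+$ or $H'^-$, and sets $h_{H'}^+=h_{H'}^-=0$. This specialization forces both sides of the split identity to vanish separately (the two sides live in disjoint sets of dual basis elements), and what survives is exactly a $B_{\mathcal{A}}$-combination representing $D^*$ in the original arrangement. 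This one-hyperplane augmentation followed by specialization to zero is the essential idea missing from your proposal, and it is precisely what supplies a base case even when the original arrangement has no chambers of type $0$, $2$, or $3$.
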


\begin{proof}
Suppose first that $D \in \breve{C}_{\mathcal{A}} \cup C_{\mathcal{A}}^{(2)} \cup C_{\mathcal{A}}^{(3)}$. Applying $x_C = \mathrm{v}(D,C)\, C^*$ to Proposition~\ref{Witt2}, we obtain
$$\sum_{F \in F_{\overline{D}}} (-1)^{\mathrm{rk}\,F} \mathrm{m}(F,D) = \lambda D^* \quad \text{with} \quad \lambda \in \{-1,1\}.$$
From Proposition \ref{mAD}, we conclude that $\displaystyle D^* = \sum_{C \in C_{\mathcal{A}}} x_C \, \gamma_{\mathcal{A}}(C)$ with $x_C \in B_{\mathcal{A}}$.

\noindent Suppose now that $D \in C_{\mathcal{A}}^{(1)}$. Consider the hyperplane arrangement $\mathcal{A}' = \mathcal{A} \sqcup \{H'\}$ such that
\begin{itemize}
\item $H'$ divides $D$ into two chambers $D_b \in \breve{C}_{\mathcal{A}'} \cup C_{\mathcal{A}'}^{(2)} \cup C_{\mathcal{A}'}^{(3)}$ and $D_u \in C_{\mathcal{A}'}^{(1)}$,
\item for every $C \in \breve{C}_{\mathcal{A}} \cup C_{\mathcal{A}}^{(2)} \cup C_{\mathcal{A}}^{(3)}$, $H' \cap C = \emptyset$,
\item if $C_{\mathcal{A}}'$ is the set of chambers in $C_{\mathcal{A}}$ cut by $H'$, then, for every $C \in C_{\mathcal{A}} \setminus C_{\mathcal{A}}'$, $C \subset H'^+$,
\item if we denote by $C_b$ and $C_u$ the two chambers obtained from the cut of $C \in C_{\mathcal{A}}'$ by $H'$, then we assume that $C_b \subset H'^+$.
\end{itemize}
Then,
\begin{align*}
& D_b^* = \sum_{C \in C_{\mathcal{A}'}} x_C \, \gamma_{\mathcal{A}'}(C) \quad \text{with} \quad x_C \in B_{\mathcal{A}'} \\
& D_b^* = \sum_{C \in C_{\mathcal{A}} \setminus C_{\mathcal{A}}'} x_C \, \gamma_{\mathcal{A}'}(C) + \sum_{C' \in C_{\mathcal{A}}'} \Big( x_{C_b'} \, \gamma_{\mathcal{A}'}(C_b') + x_{C_u'} \, \gamma_{\mathcal{A}'}(C_u') \Big) \\
& D_b^* - \sum_{C \in C_{\mathcal{A}} \setminus C_{\mathcal{A}}'} x_C \, \gamma_{\mathcal{A}'}(C) - \sum_{C' \in C_{\mathcal{A}}'} x_{C_b'} \, \gamma_{\mathcal{A}'}(C_b') = \sum_{C' \in C_{\mathcal{A}}'} x_{C_u'} \, \gamma_{\mathcal{A}'}(C_u').
\end{align*}
Setting $h_{H'}^+ = h_{H'}^- = 0$, we obtain
$$\displaystyle D_b^* - \sum_{C \in C_{\mathcal{A}} \setminus C_{\mathcal{A}}'} x_C \, \gamma_{\mathcal{A}'}(C) - \sum_{C' \in C_{\mathcal{A}}'} x_{C_b'} \, \gamma_{\mathcal{A}'}(C_b')\, \in \, \Big\{\sum_{C \in C_{\mathcal{A}} \setminus C_{\mathcal{A}}'} x_{C^*} C^* + \sum_{C' \in C_{\mathcal{A}}'} x_{C_b{'}^*} C_b{'}^*\ \Big|\ x_{C^*}, x_{C_b{'}^*} \in B_{\mathcal{A}}\Big\},$$
and $\displaystyle \sum_{C' \in C_{\mathcal{A}}'} x_{C_u'} \, \gamma_{\mathcal{A}'}(C_u')\, \in \, \Big\{\sum_{C' \in C_{\mathcal{A}}'} x_{C_u{'}^*} C_u{'}^*\ \Big|\ x_{C_u{'}^*} \in B_{\mathcal{A}}\Big\}$. The only possibility is
$$D_b^* - \sum_{C \in C_{\mathcal{A}} \setminus C_{\mathcal{A}}'} x_C \, \gamma_{\mathcal{A}'}(C) - \sum_{C' \in C_{\mathcal{A}}'} x_{C_b'} \, \gamma_{\mathcal{A}'}(C_b')\ =\ \sum_{C' \in C_{\mathcal{A}}'} x_{C_u'} \, \gamma_{\mathcal{A}'}(C_u')\ =\ 0.$$
Finally, replacing $C_b'$ by $C'$ for every $C' \in C_{\mathcal{A}}'$, we conclude that
$$D_b^* = \sum_{C \in C_{\mathcal{A}} \setminus C_{\mathcal{A}}'} x_C \, \gamma_{\mathcal{A}}(C) + \sum_{C' \in C_{\mathcal{A}}'} x_{C'} \, \gamma_{\mathcal{A}}(C') = D^*.$$
\end{proof}

\begin{proposition} \label{fdetV}
Let $\mathcal{A}$ be a hyperplane arrangement in $\mathbb{R}^n$. For every face $F \in F_{\mathcal{A}} \setminus C_{\mathcal{A}}$, there is a nonnegative integer $l_F$ such that the determinant of the Varchenko matrix of $\mathcal{A}$ has the from $$\det V_{\mathcal{A}} = \prod_{F \in F_{\mathcal{A}} \setminus C_{\mathcal{A}}} (1 - \mathrm{b}_F)^{l_F}.$$ 
\end{proposition}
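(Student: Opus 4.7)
The plan is to derive this structural result from Theorem \ref{D*} by a localization and unique-factorization argument. By construction, the matrix of the $R_{\mathcal{A}}$-linear map $\gamma_{\mathcal{A}}: M_{\mathcal{A}} \to M_{\mathcal{A}}^{*}$, computed in the basis $\{D\}_{D \in C_{\mathcal{A}}}$ of $M_{\mathcal{A}}$ and the dual basis $\{C^{*}\}_{C \in C_{\mathcal{A}}}$ of $M_{\mathcal{A}}^{*}$, is precisely the Varchenko matrix $V_{\mathcal{A}}$. Theorem \ref{D*} tells us that every $D^{*}$ lies in the $B_{\mathcal{A}}$-span of $\{\gamma_{\mathcal{A}}(C)\}_{C \in C_{\mathcal{A}}}$; assembling the coefficients $x_{C}$ into a matrix $X$ over $B_{\mathcal{A}}$ gives the matrix identity $X V_{\mathcal{A}} = I$. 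Since $V_{\mathcal{A}}$ is square, this forces $V_{\mathcal{A}}$ to be invertible over $B_{\mathcal{A}}$, and consequently $\det V_{\mathcal{A}}$ is a unit of $B_{\mathcal{A}}$.

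Next I would transfer this unit property back to $R_{\mathcal{A}}$. By definition, $B_{\mathcal{A}}$ is the localization of $R_{\mathcal{A}} = \mathbb{Z}[h_{H}^{\varepsilon}]$ at the multiplicative set generated by the elements $1 - \mathrm{b}_{F}$ for $F \in F_{\mathcal{A}} \setminus C_{\mathcal{A}}$, and $R_{\mathcal{A}}$ is a polynomial ring over $\mathbb{Z}$, hence a UFD. Each polynomial $1 - \mathrm{b}_{F} = 1 - \prod_{H \in \mathcal{A}_{F}} h_{H}^{+} h_{H}^{-}$ is linear, with content $1$, in any one of the variables it involves, and is therefore irreducible in $R_{\mathcal{A}}$ by the primitive polynomial criterion; moreover, two faces with distinct centralizations $\mathcal{A}_{F_1} \neq \mathcal{A}_{F_2}$ yield monomials $\mathrm{b}_{F_1}, \mathrm{b}_{F_2}$ supported on different variable sets, so the corresponding irreducibles are pairwise non-associate. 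Any element of $R_{\mathcal{A}}$ that becomes a unit in $B_{\mathcal{A}}$ must therefore, by unique factorization, have the form $\pm \prod_{F \in F_{\mathcal{A}} \setminus C_{\mathcal{A}}} (1 - \mathrm{b}_{F})^{l_{F}}$ with $l_{F} \in \mathbb{N}$, where the exponent arising from any repeated weight may be freely distributed among the faces that share that weight.

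Applying this to $\det V_{\mathcal{A}} \in R_{\mathcal{A}}$ yields the asserted form up to a sign. To pin down that sign, I would specialize all variables $h_{H}^{\varepsilon}$ to $0$: every off-diagonal entry of $V_{\mathcal{A}}$, being a monomial of positive degree, vanishes, while every diagonal entry equals $1$, so $\det V_{\mathcal{A}}$ specializes to $1$; since each factor $1 - \mathrm{b}_{F}$ also specializes to $1$, this forces the sign to be $+$ and reconfirms that every $l_{F}$ is a genuine nonnegative integer. The step most deserving of care in this plan is the irreducibility and pairwise non-associateness of the polynomials $1 - \mathrm{b}_{F}$ in $\mathbb{Z}[h_{H}^{\varepsilon}]$; beyond that verification, the argument is purely formal once Theorem \ref{D*} is in hand, the substance of the proof having already been carried by the Witt-type identities of Section~\ref{SecWit}.
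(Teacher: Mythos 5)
Your proposal is correct and follows essentially the same route as the paper: deduce from Theorem~\ref{D*} that $V_{\mathcal{A}}^{-1}$ has entries in $B_{\mathcal{A}}$, hence $\det V_{\mathcal{A}}$ is a unit of the localization $B_{\mathcal{A}}$, and then invoke unique factorization in $R_{\mathcal{A}}$ together with the value at $h_{H}^{\varepsilon}=0$ to pin down the form and the sign. You are somewhat more explicit than the paper in verifying the irreducibility and pairwise non-associateness of the factors $1-\mathrm{b}_{F}$, which the paper leaves implicit behind the phrase ``the only possibility is,'' but the substance is identical.
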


\begin{proof}
It is clear that $V_{\mathcal{A}}$ is the matrix representation of $\gamma_{\mathcal{A}}$. The determinant of $V_{\mathcal{A}}$ is a polynomial in $R_{\mathcal{A}}$ with constant term $1$, so $V_{\mathcal{A}}$ is invertible. From Theorem \ref{D*}, we know that, for every chamber $D \in C_{\mathcal{A}}$, there exist $x_C \in B_{\mathcal{A}}$ such that $\displaystyle D^* = \sum_{C \in C_{\mathcal{A}}} x_C \gamma_{\mathcal{A}}(C)$. Hence, $$\gamma_{\mathcal{A}}^{-1}(D^*) = \sum_{C \in C_{\mathcal{A}}} x_C \, C \quad \text{with} \quad x_C \in B_{\mathcal{A}}.$$
As the matrix representation of $\gamma_{\mathcal{A}}^{-1}$ is $V_{\mathcal{A}}^{-1}$, each entry of $V_{\mathcal{A}}^{-1}$ is then an element of $B_{\mathcal{A}}$. To finish, note that $\displaystyle V_{\mathcal{A}}^{-1} = \frac{\mathrm{adj}(V_{\mathcal{A}})}{\det V_{\mathcal{A}}}$, where each entry of $\mathrm{adj}(V_{\mathcal{A}})$ is a polynomial in $R_{\mathcal{A}}$. Then, the only possibility is $\det V_{\mathcal{A}}$ has the form $\displaystyle k \prod_{F \in F_{\mathcal{A}} \setminus C_{\mathcal{A}}} (1 - \mathrm{b}_F)^{l_F}$, with $k \in \mathbb{Z}$. As the constant term of $\det V_{\mathcal{A}}$ is $1$, we deduce that $k=1$.
\end{proof}

\noindent Define the subring $\displaystyle B_{\mathcal{A}}^K := \bigg\{\frac{p}{\displaystyle \prod_{F \in F_{\mathcal{A}}^K \setminus C_{\mathcal{A}}^K} (1 - \mathrm{b}_F)^{k_F}}\ \bigg|\ p \in R_{\mathcal{A}},\, k_F \in \mathbb{N}\bigg\}$ of $B_{\mathcal{A}}$ from the weights of the faces in $F_{\mathcal{A}}^K$. 

\begin{definition}
The \textbf{Varchenko matrix} for an apartment $K$ of a hyperplane arrangement $\mathcal{A}$ in $\mathbb{R}^n$ is
$$\displaystyle V_{\mathcal{A}}^K := \big(\mathrm{v}(D,C)\big)_{C,D \in C_{\mathcal{A}}^K}.$$
\end{definition}

\begin{proposition} \label{PdetK}
Let $\mathcal{A}$ a hyperplane arrangement in $\mathbb{R}^n$, and $K \in K_{\mathcal{A}}$. Then, for every face $F \in F_{\mathcal{A}}^K \setminus C_{\mathcal{A}}^K$, there exists a nonnegative integer $l_F$ such that $$\det V_{\mathcal{A}}^K = \prod_{F \in F_{\mathcal{A}}^K \setminus C_{\mathcal{A}}^K} (1 - \mathrm{b}_F)^{l_F}.$$ 
\end{proposition}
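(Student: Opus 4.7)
The plan is to replicate the proof of Proposition~\ref{fdetV} in the apartment setting by establishing apartment analogues of Proposition~\ref{mAD} and Theorem~\ref{D*}, then invoking the same adjugate--determinant argument. First set up the $R_{\mathcal{A}}$-module $M_{\mathcal{A}}^K := \bigl\{\sum_{C \in C_{\mathcal{A}}^K} x_C\, C : x_C \in R_{\mathcal{A}}\bigr\}$ with dual basis $\{C^*\}_{C \in C_{\mathcal{A}}^K}$, the linear map $\gamma_{\mathcal{A}}^K\colon M_{\mathcal{A}}^K \to (M_{\mathcal{A}}^K)^*$ defined by $\gamma_{\mathcal{A}}^K(D) := \sum_{C \in C_{\mathcal{A}}^K} \mathrm{v}(D, C)\, C^*$, and, for a nested pair $(A, D)$ with $A \in F_{\mathcal{A}}^K$ and $D \in C_{\mathcal{A}}^K$, the element $\mathrm{m}^K(A, D) := \sum_{C \in C_{\mathcal{A}}^K,\, AC = D} \mathrm{v}(D, C)\, C^*$. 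Observe that $V_{\mathcal{A}}^K$ is the matrix of $\gamma_{\mathcal{A}}^K$ and its determinant, having constant term $1$, is invertible in $R_{\mathcal{A}}$.

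I next prove the apartment analogue of Proposition~\ref{mAD}: $\mathrm{m}^K(A, D) \in \mathrm{span}_{B_{\mathcal{A}}^K}\{\gamma_{\mathcal{A}}^K(C) : C \in C_{\mathcal{A}}^K\}$. The proof of Proposition~\ref{Witt} analyzes $\sum_{F \in F_{\mathcal{A}}^{(A,D)},\, FC = D}(-1)^{\mathrm{rk}\,F}$ pointwise in $C$, the outcome depending only on $\epsilon_{\mathcal{A}_A}(C)$; hence the identity remains valid when the sum over $C$ is restricted to $C_{\mathcal{A}}^K$. When $A \in F_{\mathcal{A}}^K$, every $F \in F_{\mathcal{A}}^{(A,D)}$ satisfies $\epsilon_H(F) = \epsilon_H(A) = \epsilon_H(K) \neq 0$ for all $H \in \mathcal{K}$, so $F \in F_{\mathcal{A}}^K$; likewise $\tilde{D}_A \in C_{\mathcal{A}}^K$. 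The backward induction of the proof of Proposition~\ref{mAD}, together with Lemma~\ref{vCFD}, then transfers verbatim, producing the required expression with denominators $(1 - \mathrm{b}_F)$ drawn only from faces $F \in F_{\mathcal{A}}^K \setminus C_{\mathcal{A}}^K$.

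I then prove the apartment analogue of Theorem~\ref{D*}: $D^* \in \mathrm{span}_{B_{\mathcal{A}}^K}\{\gamma_{\mathcal{A}}^K(C)\}$ for every $D \in C_{\mathcal{A}}^K$. For $D \in \breve{C}_{\mathcal{A}} \cup C_{\mathcal{A}}^{(2)} \cup C_{\mathcal{A}}^{(3)}$, apply Proposition~\ref{Witt2} pointwise with $x_C = \mathrm{v}(D, C) C^*$ (taking $C^* = 0$ for $C \notin C_{\mathcal{A}}^K$) to get $\sum_{F \in F_{\overline{D}}}(-1)^{\mathrm{rk}\,F}\, \tilde{\mathrm{m}}(F, D) = \lambda\, D^*$ with $\lambda \in \{-1, 1\}$, where $\tilde{\mathrm{m}}(F, D) := \sum_{C \in C_{\mathcal{A}}^K,\, FC = D} \mathrm{v}(D, C) C^*$. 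For $F \in F_{\overline{D}} \cap F_{\mathcal{A}}^K$, $\tilde{\mathrm{m}}(F, D) = \mathrm{m}^K(F, D)$ lies in the desired span by the apartment mAD. For $F \in F_{\overline{D}} \setminus F_{\mathcal{A}}^K$, the condition $C \in K$ automatically forces $\epsilon_H(C) = \epsilon_H(K) = \epsilon_H(D)$ for $H \in \mathcal{A}_F \cap \mathcal{K}$, so only the constraints indexed by $\mathcal{A}_F \setminus \mathcal{K}$ are effective; I express $\tilde{\mathrm{m}}(F, D)$ as a signed combination of $\mathrm{m}^K(G, D)$'s with $G \in F_{\mathcal{A}}^K$, $G \preceq D$, via inclusion--exclusion over subsets of $\mathcal{A}_F \setminus \mathcal{K}$ (each realizable subset gives a genuine face $G$, while unrealizable top subsets are compensated by cancellations among the lower strata). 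For $D \in C_{\mathcal{A}}^K \cap C_{\mathcal{A}}^{(1)}$, I deploy the auxiliary-hyperplane $H'$ trick from the original proof of Theorem~\ref{D*}, choosing $H' \notin \mathcal{K}$ so that $K$ remains an apartment of $\mathcal{A} \cup \{H'\}$, applying the identities to the refined chambers, and specializing $h_{H'}^+ = h_{H'}^- = 0$ at the end.

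Finally, conclude as in Proposition~\ref{fdetV}: the apartment D* shows that each entry of $(V_{\mathcal{A}}^K)^{-1}$ lies in $B_{\mathcal{A}}^K$; combining with $(V_{\mathcal{A}}^K)^{-1} = \mathrm{adj}(V_{\mathcal{A}}^K)/\det V_{\mathcal{A}}^K$ (whose adjugate has polynomial entries) and the constant term $1$ of $\det V_{\mathcal{A}}^K$, the only possibility is $\det V_{\mathcal{A}}^K = \prod_{F \in F_{\mathcal{A}}^K \setminus C_{\mathcal{A}}^K}(1 - \mathrm{b}_F)^{l_F}$ with nonnegative integers $l_F$. I expect the main obstacle to be the apartment D*, specifically the bookkeeping for boundary faces $F \in F_{\overline{D}} \setminus F_{\mathcal{A}}^K$: when $\bigcap_{H \in \mathcal{A}_F \setminus \mathcal{K}} H$ fails to meet $K$ there is no single $G \in F_{\mathcal{A}}^K$ with $\mathcal{A}_G = \mathcal{A}_F \setminus \mathcal{K}$, so a careful inclusion--exclusion across realizable faces is required; the type-$(1)$ subcase similarly needs the auxiliary-hyperplane trick to be adapted while preserving $\mathcal{K}$ and the apartment structure.
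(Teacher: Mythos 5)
Your setup and overall strategy match the paper's exactly: build $\gamma_{\mathcal{A}}^K$ and $\mathrm{m}_K$, transplant Proposition~\ref{mAD} and Theorem~\ref{D*} to the apartment, and conclude with the adjugate argument of Proposition~\ref{fdetV}. Your observation that Proposition~\ref{Witt} holds pointwise in $C$ (so the sum restricts to $C_{\mathcal{A}}^K$) and that $A \in F_{\mathcal{A}}^K$ forces $F_{\mathcal{A}}^{(A,D)} \subseteq F_{\mathcal{A}}^K$ and $\tilde{D}_A \in C_{\mathcal{A}}^K$ is correct and is exactly what makes the apartment analogue of Proposition~\ref{mAD} go through.

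The gap is where you yourself flagged it: the boundary faces $F \in F_{\overline{D}} \setminus F_{\mathcal{A}}^K$ in the apartment analogue of Theorem~\ref{D*}. You propose an inclusion--exclusion over subsets of $\mathcal{A}_F \setminus \mathcal{K}$ and acknowledge that when $\bigcap_{H \in \mathcal{A}_F \setminus \mathcal{K}} H$ does not meet $K$ there is no single realizable face $G$ to land on. This route is neither worked out nor necessary, and that is the genuine missing idea. The paper instead sets $h_H^+ = h_H^- = 0$ for every $H \in \mathcal{K}$ (a harmless device, since no entry of $V_{\mathcal{A}}^K$ involves those variables, so $\det V_{\mathcal{A}}^K$ is unchanged) and then extends the backward induction to \emph{all} $A \preceq D$, not only $A \in F_{\mathcal{A}}^K$. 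The point is that if $A \notin F_{\mathcal{A}}^K$ then there is $H_0 \in \mathcal{K} \cap \mathcal{A}_A$ with $\epsilon_{H_0}(A) = 0$, hence $\epsilon_{H_0}(\tilde{D}_A) = -\epsilon_{H_0}(K)$ and $\tilde{D}_A \notin C_{\mathcal{A}}^K$; consequently the set $\{C \in C_{\mathcal{A}}^K : AC = \tilde{D}_A\}$ is empty and the right-hand side of the restricted Witt identity vanishes. The identity then reads $\mathrm{m}_K(A,D) = -\sum_{F \in F_{\mathcal{A}}^{(A,D)} \setminus \{A\}} (-1)^{\mathrm{rk}\,F - \mathrm{rk}\,A} \mathrm{m}_K(F,D)$, which introduces no new denominator $(1 - \mathrm{b}_A)$ at all; by backward induction on $\mathrm{rk}\,A$ every $\mathrm{m}_K(F,D)$ with $F \preceq D$ lies in $\mathrm{span}_{B_{\mathcal{A}}^K}\{\gamma_{\mathcal{A}}^K(C)\}$, and the boundary faces are absorbed for free. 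If you replace your inclusion--exclusion step with this vanishing-RHS observation, the rest of your argument (including the auxiliary hyperplane $H'$ for the type-$1$ case, chosen so that $K$ remains an apartment) goes through as planned.
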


\begin{proof}
Let $\mathcal{K}$ be the subset of $\mathcal{A}$ containing the hyperplanes $H$ such that $\dim H \cap \overline{K} = n-1$. For every $H \in \mathcal{K}$, set $h_H^+ = h_H^- = 0$. Hence, $\mathrm{v}(D,C) = 0$ whenever one of $C$ or $D$ is a chamber in $C_{\mathcal{A}}^K$ but the other not. For every $D \in C_{\mathcal{A}}^K$, and any nested face $(A,D)$, define $$\gamma_{\mathcal{A}}^K(D) := \sum_{C \in C_{\mathcal{A}}^K} \mathrm{v}(D,C)\, C^* \quad \text{and} \quad \mathrm{m}_K(A,D) := \sum_{\substack{C \in C_{\mathcal{A}}^K \\ AC = D}} \mathrm{v}(D,C)\, C^*.$$
In that case, $\gamma_{\mathcal{A}}(D) = \gamma_{\mathcal{A}}^K(D)$ and $\mathrm{m}(A,D) = \mathrm{m}_K(A,D)$. With a backward induction similar to the proof of Proposition \ref{mAD}, we prove that
$\displaystyle \mathrm{m}_K(A,D) = \sum_{C \in C_{\mathcal{A}}^K} x_C \, \gamma_K(C)$ with $x_C \in B_{\mathcal{A}}^K$. 

\noindent We distinguish four kinds of chambers $C \in C_{\mathcal{A}}^K$ according to their frontiers:
\begin{enumerate}
\item if $\partial C$ is homeomorphic to $B^{n-1}$, we say that $C$ is of type $1$, and write $C \in C_{\mathcal{A}}^{K,1}$,
\item if $\partial C$ is homeomorphic to $\partial B^{n-1} \times \mathbb{R}$, we say that $C$ is of type $2$, and write $C \in C_{\mathcal{A}}^{K,2}$,
\item if $\partial C$ is homeomorphic to $\partial B^1 \times \mathbb{R}^{n-1}$, we say that $C$ is of type $3$, and write $C \in C_{\mathcal{A}}^{K,3}$,
\item if $\partial C$ is homeomorphic to $\partial B^n$, we say that $C$ is bounded, and write $C \in \breve{C}_{\mathcal{A}}^K$.
\end{enumerate}

\noindent Suppose that $D \in \breve{C}_{\mathcal{A}}^K \cup C_{\mathcal{A}}^{K,2} \cup C_{\mathcal{A}}^{K,3}$. With the same argument as in the proof of Theorem~\ref{D*}, we get $\displaystyle \sum_{F \in F_{\overline{D}}} (-1)^{\mathrm{rk}\,F} \mathrm{m}_K(F,D) = \pm D^*$, and then $\displaystyle D^* = \sum_{C \in C_{\mathcal{A}}^K} x_C \, \gamma_{\mathcal{A}}^K(C)$ with $x_C \in B_{\mathcal{A}}^K$.

\noindent Suppose now that $D \in C_{\mathcal{A}}^{K,1}$. Consider the hyperplane arrangement $\mathcal{A}' = \mathcal{A} \sqcup \{H'\}$ such that
\begin{itemize}
\item $H'$ divides $D$ into two chambers $D_b \in \breve{C}_{\mathcal{A}'}^K \cup C_{\mathcal{A}'}^{K,2} \cup C_{\mathcal{A}'}^{K,3}$ and $D_u \in C_{\mathcal{A}'}^{K,1}$,
\item for every $C \in \breve{C}_{\mathcal{A}}^K \cup C_{\mathcal{A}}^{K,2} \cup C_{\mathcal{A}}^{K,3}$, $H' \cap C = \emptyset$,
\item if $C_{\mathcal{A}}'^K$ is the set of chambers in $C_{\mathcal{A}}^K$ cut by $H'$, then, for every $C \in C_{\mathcal{A}}^K \setminus C_{\mathcal{A}}'^K$, $C \subset H'^+$,
\item if we denote by $C_b$ and $C_u$ the two chambers obtained from the cut of $C \in C_{\mathcal{A}}'^K$ by $H'$, then we assume that $C_b \subset H'^+$.
\end{itemize}
With an argument similar to the proof of Theorem~\ref{D*}, we obtain $$D_b^* = \sum_{C \in C_{\mathcal{A}}^K \setminus C_{\mathcal{A}}'^K} x_C \, \gamma_{\mathcal{A}}^K(C) + \sum_{C' \in C_{\mathcal{A}}'^K} x_{C'} \, \gamma_{\mathcal{A}}^K(C') = D^* \quad \text{with} \quad x_C, x_{C'} \in B_{\mathcal{A}}^K.$$

\noindent Finally, as $V_{\mathcal{A}}^K$ is the matrix representation of $\gamma_K$, one proves with an argument similar to the proof of Proposition \ref{fdetV} that $\det V_{\mathcal{A}}^K$ has the form $\displaystyle \prod_{F \in F_{\mathcal{A}}^K \setminus C_{\mathcal{A}}^K} (1 - \mathrm{b}_F)^{k_F}$. 
\end{proof}

\noindent The restriction of a hyperplane arrangement $\mathcal{A}$ to an apartment $K \in K_{\mathcal{A}}$ is the set arrangement $\mathcal{A}(K) := \{H \cap K\ |\ H \in \mathcal{A}:\, H \cap K \neq \emptyset\}$.

\noindent Let $F \in F_\mathcal{A}$, and $H \in \mathcal{A}$ such that $F \subseteq H$. Define the integer $\displaystyle \beta_F^H := \frac{\#\{C \in C_{\mathcal{A}}\ |\ \overline{C} \cap H = F\}}{2}$.
The following theorem not only computes $\det V_{\mathcal{A}}$, but also prove that for every hyperplane in the centralization of $F$, $\beta_F^H$ is the same. Hence the definition of the multiplicity.

\begin{theorem} \label{V}
Let $\mathcal{A}$ be a hyperplane arrangement in $\mathbb{R}^n$, and $F \in F_{\mathcal{A}}$. Then, $\beta_F^H$ has the same value $\beta_F$ for every $H \in \mathcal{A}_F$, and $$\det V_{\mathcal{A}} = \prod_{F \in F_{\mathcal{A}} \setminus C_{\mathcal{A}}} (1 - \mathrm{b}_F)^{\beta_F}.$$
\end{theorem}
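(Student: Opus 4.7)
By Proposition~\ref{fdetV}, we already have $\det V_{\mathcal{A}} = \prod_{F \in F_{\mathcal{A}} \setminus C_{\mathcal{A}}} (1 - \mathrm{b}_F)^{l_F}$ for some nonnegative integers $l_F$. The content of Theorem~\ref{V} therefore reduces to showing $l_F = \beta_F^H$ for every $H \in \mathcal{A}_F$: since $l_F$ is intrinsically determined by the determinant, such an identification simultaneously establishes that $\beta_F^H$ is independent of the choice of $H \in \mathcal{A}_F$, justifying the definition of the multiplicity $\beta_F$.

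The plan is to fix a non-chamber face $F$ and a hyperplane $H_0 \in \mathcal{A}_F$, and to extract the exponent of $(1 - \mathrm{b}_F)$ in $\det V_{\mathcal{A}}$ by a well-chosen specialization. Specifically, I would set $h_H^+ = h_H^- = 0$ for every $H \notin \mathcal{A}_F$. Under this specialization, $\mathrm{v}(D,C)$ vanishes whenever $C$ and $D$ are separated by a hyperplane outside $\mathcal{A}_F$, so $V_{\mathcal{A}}$ becomes block-diagonal, with blocks indexed by the chambers $T$ of the sub-arrangement $\mathcal{A} \setminus \mathcal{A}_F$. On each block, the surviving matrix records only $\mathcal{A}_F$-separations between chambers of $\mathcal{A}$ lying in $T$; since the hyperplanes of $\mathcal{A}_F$ all contain $\mathrm{aff}(F)$, this block is the Varchenko matrix of an apartment of the central arrangement $\mathcal{A}_F$ cut out by $T$.

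Applying Aguiar and Mahajan's formula for central arrangements and their cones (\cite[Theorem~8.11]{AgMa} and \cite[Theorem~8.12]{AgMa}) to each block produces a local factorization $\prod_{F'} (1 - \mathrm{b}_{F'})^{\beta_{F'}}$ over the local non-chamber faces. Under the specialization, $(1 - \mathrm{b}_F)$ survives unchanged (since $\mathrm{b}_F$ involves only $\mathcal{A}_F$-variables), while $(1 - \mathrm{b}_G)$ reduces to $1$ whenever $\mathcal{A}_G \not\subseteq \mathcal{A}_F$. Matching both sides of the specialized identity $\prod_F (1 - \mathrm{b}_F)^{l_F} \, \big|_{\mathrm{spec}} = \prod_T \det V_T$, and tracking which local faces correspond to the global face $F$, yields $l_F = \beta_F^{H_0}$.

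The main obstacle is precisely the bookkeeping in this last step: verifying that, summed over all cells $T$ with $F \subseteq \overline{T}$, the local multiplicities attached to $F$ total exactly $\beta_F^{H_0} = \tfrac{1}{2}\#\{C \in C_{\mathcal{A}} : \overline{C} \cap H_0 = F\}$. The key combinatorial observation to push this through is that such chambers $C$ pair into reflective pairs across $H_0$, and each pair is precisely what contributes a $(1-\mathrm{b}_F)$-factor in the local central-arrangement formula for some block $T$. Since the computation is valid verbatim for any choice of $H_0 \in \mathcal{A}_F$, the equality $l_F = \beta_F^{H_0}$ forces $\beta_F^H$ to be independent of $H$, proving both the well-definedness of $\beta_F$ and the claimed factorization.
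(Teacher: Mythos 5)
Your starting point is right: by Proposition~\ref{fdetV} the determinant already factors as $\prod_F (1-\mathrm{b}_F)^{l_F}$, and the specialization $h_H^{\pm}=0$ for $H\notin\mathcal{A}_F$ is exactly the lever both you and the paper pull. The block-diagonal decomposition of $V_{\mathcal{A}}$ into blocks indexed by the chambers $T$ of $\mathcal{A}\setminus\mathcal{A}_F$ is also correct. The gap is in the next step: the chambers of $\mathcal{A}_F$ meeting a given $T$ do \emph{not} in general form a cone of the central arrangement $\mathcal{A}_F$, so you cannot invoke \cite[Theorems~8.11--8.12]{AgMa} block by block. For a concrete obstruction, take $\mathcal{A}=\{H_1,H_2,\ell\}$ in $\mathbb{R}^2$ with $H_1=\{x=0\}$, $H_2=\{y=0\}$, $\ell=\{x+y=1\}$, and $F$ the origin. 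Then $\mathcal{A}\setminus\mathcal{A}_F=\{\ell\}$, and the half-plane $T=\{x+y>1\}$ meets precisely three of the four chambers of $\mathcal{A}_F$; no chamber of any subset of $\{H_1,H_2\}$ contains exactly three quadrants, so this block is not the Varchenko matrix of a cone of $\mathcal{A}_F$. Its determinant does happen to factor nicely, but that is a consequence of the apartment theorem being proved, not of the cone case already available --- using it here would be circular. The final ``bookkeeping'' step, asserting that reflective pairs across $H_0$ account exactly for the local multiplicities, is also left at the level of a plausibility claim rather than an argument.

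The paper's proof sidesteps the need to control every block. It fixes the face $E$, chooses a \emph{single} apartment $K$ with $\mathcal{A}(K)$ central with center $E$ (so all hyperplanes of $\mathcal{A}$ meeting $K$ pass through $E$), and relies on Proposition~\ref{PdetK} --- its own factorization for apartments, with as-yet-undetermined exponents --- rather than on the Aguiar--Mahajan cone theorem. It then runs a backward induction on $\dim E$: for all faces $F\ne E$ in $F_{\mathcal{A}}^K$ the exponents $l_F=\beta_F$ are already known by induction (those $F$ all have strictly larger dimension because $\mathcal{A}(K)$ is central at $E$), and the remaining unknown exponent $l_E$ is extracted by comparing, for a fixed $H\in\mathcal{A}_E$, the exponent of $h_H^+h_H^-$ in the leading monomial of $\det V_{\mathcal{A}}^K$, which equals $\#C_{\mathcal{A}}^K/2$. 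Combined with the identity $\sum_{F\subseteq H}\beta_F^H=\#C_{\mathcal{A}}^K/2$, this pins down $l_E=\beta_E^H$ for every $H\in\mathcal{A}_E$ simultaneously, giving well-definedness of $\beta_E$ as a byproduct. To salvage your route you would essentially have to reprove the apartment factorization for the non-cone regions $T$, which is the content of Proposition~\ref{PdetK}; at that point the leading-monomial comparison is the cleanest way to pin the exponent, and you would have reconstructed the paper's argument.
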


\begin{proof}
From Proposition \ref{fdetV}, we have $\displaystyle \det V_{\mathcal{A}} = \prod_{F \in F_{\mathcal{A}} \setminus C_{\mathcal{A}}} (1 - \mathrm{b}_F)^{l_F}$. Take a face $E \in F_{\mathcal{A}} \setminus C_{\mathcal{A}}$: there exists an apartment $K \in K_{\mathcal{A}}$ such that $\mathcal{A}(K)$ is central with center $E$, and
$$\det V_{\mathcal{A}}^K = \prod_{F \in F_{\mathcal{A}}^K \setminus C_{\mathcal{A}}^K} (1 - \mathrm{b}_F)^{l_F'}.$$ Setting $h_H^+ = h_H^- = 0$ for every $H \in \mathcal{A} \setminus \mathcal{A}_E$, we see that, for every $F \in F_{\mathcal{A}}^K \setminus C_{\mathcal{A}}^K$, $l_F = l_F'$.

\noindent We prove by backward induction on the dimension of $E$ that $$\forall H,H' \in \mathcal{A}_E:\ \beta_E^H = \beta_E^{H'} = \beta_E \quad \text{and} \quad \det V_{\mathcal{A}}^K = \prod_{F \in F_{\mathcal{A}}^K \setminus C_{\mathcal{A}}^K} (1 - \mathrm{b}_F)^{\beta_F}.$$

\noindent Remark that $\displaystyle \beta_F^H = \frac{\#\{C \in C_{\mathcal{A}}^K\ |\ \overline{C} \cap H = F\}}{2}$. It is clear that, if $\dim E = n-1$, then $\beta_E = 1$ and $\det \mathcal{A}_E = 1 - \mathrm{b}_E$. If $\dim E < n-1$, by induction hypothesis, $$\det V_{\mathcal{A}}^K = (1 - \mathrm{b}_E)^{l_E} \, \prod_{F \, \in \, (F_{\mathcal{A}}^K \setminus C_{\mathcal{A}}^K) \setminus E} (1 - \mathrm{b}_F)^{\beta_F}.$$
Note that the leading monomial in $\det V_{\mathcal{A}}^K$ is $\displaystyle (-1)^{\frac{\#C_{\mathcal{A}}^K}{2}} \prod_{C \in C_{\mathcal{A}}^K} \mathrm{v}(C,\tilde{C}_E) \, = \, \big( -\prod_{H \in \mathcal{A}_E} h_H^+ \, h_H^- \big)^{\frac{\#C_{\mathcal{A}}^K}{2}}$.
Comparing the exponent of $h_H^+ \, h_H^-$, we get $\displaystyle l_E \, = \, \frac{\#C_{\mathcal{A}}^K}{2} - \sum_{\substack{F \, \in \, (F_{\mathcal{A}}^K \setminus C_{\mathcal{A}}^K) \setminus E \\ F \subseteq H}} \beta_F^H \, = \, \beta_E^H$.
\end{proof}

\noindent We can finally compute the Varchenko determinant for an apartment.

\begin{theorem}
Let $\mathcal{A}$ be a hyperplane arrangement in $\mathbb{R}^n$, and $K \in K_{\mathcal{A}}$. Then,
$$\det V_{\mathcal{A}}^K = \prod_{F \in F_{\mathcal{A}}^K \setminus C_{\mathcal{A}}^K} (1 - \mathrm{b}_F)^{\beta_F}.$$
\end{theorem}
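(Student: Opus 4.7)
The plan is to carry out, inside the apartment, the same backward-induction-on-$\dim E$ and leading-monomial argument that proved Theorem \ref{V}. Proposition \ref{PdetK} already yields the factorization
\[
\det V_{\mathcal{A}}^K = \prod_{F \in F_{\mathcal{A}}^K \setminus C_{\mathcal{A}}^K} (1 - \mathrm{b}_F)^{l_F}
\]
with $l_F \in \mathbb{N}$, so it only remains to check that each $l_F$ equals $\beta_F$.

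The decisive preliminary observation is that $\mathcal{A}_E \cap \mathcal{K} = \emptyset$ for every $E \in F_{\mathcal{A}}^K$: if $H \in \mathcal{A}_E$ then $\epsilon_H(E) = 0$, while $E \subseteq K$ forces $\epsilon_H(E) \ne 0$ for $H \in \mathcal{K}$. Consequently any chamber $C \in C_{\mathcal{A}}$ with $E \preceq C$ shares $K$'s signs on $\mathcal{K} \subseteq \mathcal{A}\setminus\mathcal{A}_E$, so it already lies in $C_{\mathcal{A}}^K$. This identifies the multiplicity inside $K$ with its global counterpart,
\[
\#\{C \in C_{\mathcal{A}}^K : \overline{C}\cap H = E\} = \#\{C \in C_{\mathcal{A}} : \overline{C}\cap H = E\},
\]
and keeps the involution $C \mapsto \tilde{C}_E$ stable on $C_{\mathcal{A}}^K$ (needed to run the leading-monomial argument inside the apartment).

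For the inductive step at $E$, I would specialise $h_H^+ = h_H^- = 0$ for every $H \in \mathcal{A}\setminus\mathcal{A}_E$. The matrix $V_{\mathcal{A}}^K$ then splits into blocks indexed by the sign patterns on $\mathcal{A}\setminus\mathcal{A}_E$ carried by chambers of $C_{\mathcal{A}}^K$. The \emph{central} block, indexed by $\epsilon_{\mathcal{A}\setminus\mathcal{A}_E}(E)$, consists of exactly the chambers $C$ with $E \preceq C$, is stable under the fixed-point-free involution $C \mapsto \tilde{C}_E$, and is naturally identified with the Varchenko matrix $V_{\mathcal{A}_E}$ of the central arrangement. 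Equating the exponent of a fixed $h_H^+ h_H^-$ with $H \in \mathcal{A}_E$ on both sides of the specialised factorization, and using the induction hypothesis on every $F$ with $\dim F > \dim E$ and $F \subseteq H$, yields, exactly as in the last display of the proof of Theorem \ref{V},
\[
l_E \; = \; \frac{\#\{C \in C_{\mathcal{A}}^K : E \preceq C\}}{2} - \sum_{\substack{F \in F_{\mathcal{A}}^K \setminus C_{\mathcal{A}}^K \\ F \supsetneq E,\; F \subseteq H}} \beta_F \; = \; \frac{\#\{C \in C_{\mathcal{A}}^K : \overline{C}\cap H = E\}}{2} \; = \; \beta_E.
\]

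The main obstacle is controlling the contributions of the non-central blocks. Each non-central block is itself the Varchenko matrix of a sub-arrangement of $\mathcal{A}_E$ restricted to a chamber of $\mathcal{A}\setminus\mathcal{A}_E$ other than the one containing $E$, and it can still produce factors of the form $(1-\mathrm{b}_{F'})$ with $\mathcal{A}_{F'} = \mathcal{A}_E$ whenever a face $F' \ne E$ of the apartment happens to share $E$'s centralisation. These extra contributions have to be attributed, via Theorem \ref{V} applied block by block, to the $l_{F'}$-exponents of such $F'$ rather than to $l_E$, so that the comparison of exponents still pins down $l_E$ on its own. The base case $\dim E = n-1$ is handled by the same preliminary observation applied to a panel; once the bookkeeping across all blocks is in place, the induction closes and $l_F = \beta_F$ holds for every $F$, completing the proof.
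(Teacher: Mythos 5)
Your proposal follows the same strategy as the paper: start from Proposition~\ref{PdetK}, then pin down each exponent $l_E$ by backward induction on $\dim E$, specialising $h_H^+ = h_H^- = 0$ for $H \notin \mathcal{A}_E$ and comparing exponents of $h_H^+ h_H^-$ via the leading-monomial argument from Theorem~\ref{V}. Your ``central block'' is precisely the matrix $V_{\mathcal{A}}^L$ that the paper obtains by choosing a smaller apartment $L \subseteq K$ with $\mathcal{A}(L)$ central with center $E$, so the two arguments are the same modulo presentation. Your preliminary observation that $\mathcal{A}_E \cap \mathcal{K} = \emptyset$ and hence that every chamber $C$ with $E \preceq C$ already lies in $C_{\mathcal{A}}^K$ is correct, worth stating, and is what underlies the paper's unremarked identity $\beta_F^H = \tfrac{1}{2}\#\{C \in C_{\mathcal{A}}^K : \overline{C}\cap H = F\}$.

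The one place where the two proofs diverge in execution is exactly the difficulty you flag in your last paragraph. The paper handles the non-central-block contributions by invoking Proposition~\ref{PdetK} a second time for the subapartment $L$ to get exponents $l_F'$, claiming $l_F = l_F'$ for $F \in F_{\mathcal{A}}^L \setminus C_{\mathcal{A}}^L$ after the specialisation, and then quoting the proof of Theorem~\ref{V} (where the centered-apartment hypothesis makes $E$ the unique minimal face) to conclude $l_F' = \beta_F$. You instead want to apply Theorem~\ref{V} block by block inside $K$ and attribute each surviving factor $(1 - \mathrm{b}_{F'})$ with $\mathcal{A}_{F'} \subseteq \mathcal{A}_E$ to its originating block. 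That attribution is exactly what needs to be made explicit: after specialisation the faces of $F_{\mathcal{A}}^K$ contributing to the product split into disjoint families, one per chamber of $\mathcal{A}\setminus\mathcal{A}_E$ meeting $K$, and the central family is $F_{\mathcal{A}}^L \setminus C_{\mathcal{A}}^L$; once you say that, your displayed computation of $l_E$ closes the induction and reproduces the paper's last display of Theorem~\ref{V}. So the proposal is correct and essentially the paper's argument, with the reduction-to-$L$ step replaced by an equivalent block-diagonalisation, and the bookkeeping you defer is the same step the paper compresses into ``we see that $l_F = l_F'$.''
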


\begin{proof}
From Proposition \ref{PdetK}, we have $\displaystyle \det V_{\mathcal{A}}^K = \prod_{F \in F_{\mathcal{A}}^K \setminus C_{\mathcal{A}}^K} (1 - \mathrm{b}_F)^{l_F}$. Take a face $E \in F_{\mathcal{A}}^K \setminus C_{\mathcal{A}}^K$: there exists an apartment $L \in K_{\mathcal{A}}$ such that $L \subseteq K$, $\mathcal{A}(L)$ is central with center $E$, and
$$\det V_{\mathcal{A}}^L = \prod_{F \in F_{\mathcal{A}}^L \setminus C_{\mathcal{A}}^L} (1 - \mathrm{b}_F)^{l_F'}.$$ Setting $h_H^+ = h_H^- = 0$ for every $H \in \mathcal{A} \setminus \mathcal{A}_E$, we see that, for every $F \in F_{\mathcal{A}}^L \setminus C_{\mathcal{A}}^L$, $l_F = l_F'$. Finally, we deduce from the proof of Theorem \ref{V} that $l_F = \beta_F$.
\end{proof}

\begin{remark}
Like mentioned by Hochstättler and Welker at the end of their article, a possible direction for generalizations is the computing of the Varchenko determinant of COMs. That would unify the Varchenko determinant for oriented matroids with use of the quantum bilinear form they computed \cite[Theorem~1]{HoWe} and Theorem~\ref{MTh2}. We would like to thank the reviewer who took the time to write detailed comments. Those were both really informative on the Varchenko determinant and very helpful for the presentation of this article.
\end{remark}

\bibliographystyle{abbrvnat}

\end{document}